\newtheorem{definition}{Definition}[section]
\newtheorem{proposition}[definition]{Proposition}
\newtheorem{theorem}[definition]{Theorem}
\newtheorem{lemma}[definition]{Lemma}
\newtheorem{remark}{Remark}[section]
\title[Semilinear damped wave equation]{
Blow-up of solutions to the one-dimensional semilinear wave equation
with damping depending on time and space variables
}
\author{Yuta WAKASUGI}
\email{y-wakasugi@cr.math.sci.osaka-u.ac.jp}
\address{Department of Mathematics, Graduate School of Science,
Osaka University, Toyonaka, Osaka, 560-0043, Japan}
\begin{document}
\begin{abstract}
In this paper, we give a small data blow-up result for
the one-dimensional semilinear wave equation with damping
depending on time and space variables.
We show that if the damping term can be regarded as perturbation,
that is,
non-effective damping in a certain sense,
then the solution blows up in finite time for any power of nonlinearity.
This gives an affirmative answer for the conjecture that
the critical exponent agrees with that of the wave equation
when the damping is non-effective in one space dimension.
\end{abstract}
\keywords{semilinear damped wave equation; blow-up}

\maketitle
\section{Introduction}
We consider the initial value problem of
the one-dimensional semilinear damped wave equation
\begin{align}
\label{eq11}
	\left\{ \begin{array}{ll}
		u_{tt}-u_{xx}+a(t,x)u_t=|u|^p,&(t,x)\in (0,\infty)\times\mathbf{R},\\
		(u,u_t)(0,x)=(u_0,u_1)(x),&x\in \mathbf{R},
	\end{array}\right.
\end{align}
where
$u=u(t,x)$
is real-valued unknown and
$p>1$.
We assume that
$a(t,x)\in C^2([0,\infty)\times\mathbf{R})$
satisfies
\begin{equation}
\label{asa1}
	|\partial_t^{\alpha}\partial_x^{\beta}a(t,x)|
	\le \frac{\delta}{(1+t)^{k+\alpha}}\quad (\alpha,\beta=0,1)
\end{equation}
with some
$k>1$
and small
$\delta>0$.

In this paper we will prove a small data blow-up result,
that is,
if the initial data satisfy a certain condition,
which is independent of their amplitude,
then the corresponding solution blows up in finite time.

For the $n$-dimensional semilinear damped wave equation
\begin{equation}
	\label{DWn}
	u_{tt}-\Delta u+a(t,x)u_t=|u|^p,
\end{equation}
the {\em critical exponent} 
$p_c$
is well studied.
Here ``critical" means that if
$p_c < p$,
all small data solutions of \eqref{eq11} are global;
if
$1 < p \le p_c$,
the local solution cannot be extended globally even for small data.

When $a\equiv 1$,
Todorova and Yordanov \cite{TY1} and Zhang \cite{Z}
determined $p_c=1+2/n$.
Ikehata, Todorova and Yordanov \cite{ITY}
treated the case
$a\sim \langle x\rangle^{-\alpha}$
with
$0\le \alpha <1$
and proved that $p_c=1+2/(n-\alpha)$.
For the time-dependent case
$a=(1+t)^{-\beta}$,
Lin, Nishihara and Zhai \cite{LNZ}
determined
$p_c=1+2/n$
(see also \cite{DLR, DL} for more general types of damping
depending on the time variable).

However, there are only few results for the damping depending on
both time and space variables.
The author \cite{Wa1} considered
$a=\langle x\rangle^{-\alpha}(1+t)^{-\beta}$
with
$0\le \alpha, \beta$
and
$0\le \alpha+\beta<1$.
In this case it is conjectured that
$p_c$
is given by
$1+2/(n-\alpha)$.
He proved that if
$p>1+2/(n-\alpha)$,
then there is a unique global solution for any small data.
Recently, a similar result is obtained by Khader \cite{Kh} independently.
However, we do not know
whether the solution blows-up
in finite time when
$1<p\le 1+2/(n-\alpha)$.

In the previous results \cite{Z, ITY, LNZ},
the blow-up parts were obtained by a test-function method
developed by \cite{Z}.
As we will see in Section 3,
in order to apply the test function method,
we have to transform the equation \eqref{eq11} into divergence form
and the nonlinear term must be positive.
When the damping term depends only on the time variable,
Lin, Nishihara and Zhai \cite{LNZ}
used a positive solution $g(t)$ of an appropriate ordinary differential equation
and transformed the equation into divergence form.

Turning back to our problem,
we follow \cite{LNZ} and transform the equation (\ref{eq11}) into divergence form.
Multiplying \eqref{eq11} by a positive function
$g=g(t,x)$,
we have
\begin{equation}
\label{eqgu}
	(gu)_{tt}-(gu)_{xx}+2(g_xu)_x+((-2g_t+ga)u)_t
	+(g_{tt}-g_{xx}-(ga)_t)u=g|u|^p.
\end{equation}
Thus, if $g$ satisfies
\begin{equation}
\label{eqg}
	g_{tt}-g_{xx}-(ga)_t=0,
\end{equation}
then (\ref{eqgu}) becomes divergence form
and we can apply the test function method.
We will find a solution $g$ of (\ref{eqg}) having the form
\begin{equation}
\label{eqgh}
	g(t,x)=1+h(t,x),
\end{equation}
where $h$ has small amplitude, more precisely,
$|h(t,x)|\le \theta$
with some
$\theta\in (0,1)$.
This ensures the positivity of
$g$
and so the nonlinearity
$g|u|^p$.
Then
$h$
must satisfy
\begin{equation}
\label{eqh}
	h_{tt}-h_{xx}-a(t,x)h_t-a_t(t,x)(1+h)=0.
\end{equation}
We can find a classical solution
$h$
of \eqref{eqh}
having desired property by the method of characteristics.

\begin{lemma}\label{lem1}
Let
$\theta\in (0,1)$
and
$k>1$.
Then there exists
$\delta_0>0$
such that for all $\delta\in (0,\delta_0)$
the following holds:
if $a$ satisfies \eqref{asa1}, then
there exists a solution $h\in C^2([0,\infty)\times \mathbf{R})$
of \eqref{eqh} satisfying
\begin{equation}
\label{esh}
	|h(t,x)|\le \frac{\theta}{(1+t)^{k-1}},\quad
	|\partial_{t}^{\alpha}\partial_x^{\beta}h(t,x)|\le \frac{C}{(1+t)^k}
	\quad (\alpha+\beta= 1)
\end{equation}
for all
$(t,x)\in [0,\infty)\times \mathbf{R}$
with some constant
$C>0$.
\end{lemma}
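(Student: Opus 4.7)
The plan is to recast (\ref{eqh}) as a first-order system of integral equations along the characteristics of the wave operator and to solve it by a contraction-mapping argument in a weighted space encoding (\ref{esh}) directly. Introduce the Riemann invariants $v := h_t+h_x$ and $w := h_t-h_x$, which convert (\ref{eqh}) into the diagonal system
\begin{equation*}
(\partial_t-\partial_x)v \;=\; (\partial_t+\partial_x)w \;=\; a(t,x)\tfrac{v+w}{2}+a_t(t,x)(1+h).
\end{equation*}
Since (\ref{esh}) forces $h,v,w\to 0$ as $t\to\infty$, I prescribe vanishing data at $t=+\infty$ and integrate backwards along the two characteristic families. With $\Phi[h,v,w](s,y):=a(s,y)(v(s,y)+w(s,y))/2+a_t(s,y)(1+h(s,y))$, this yields
\begin{align*}
v(t,x)&=-\int_t^\infty \Phi\bigl(s,x-(s-t)\bigr)\,ds,\\
w(t,x)&=-\int_t^\infty \Phi\bigl(s,x+(s-t)\bigr)\,ds,\\
h(t,x)&=-\int_t^\infty \tfrac{v(s,x)+w(s,x)}{2}\,ds.
\end{align*}

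Next, work in the closed ball $\mathcal{B}_M$ of the Banach space with norm
\[
\|(h,v,w)\|:=\sup_{t,x}(1+t)^{k-1}|h(t,x)|+\sup_{t,x}(1+t)^k\bigl(|v(t,x)|+|w(t,x)|\bigr).
\]
Using (\ref{asa1}) and the hypothesis $k>1$ (so that $(1+s)^{-2k}\le(1+s)^{-k-1}$), for $(h,v,w)\in\mathcal B_M$ one estimates $|\Phi|\le C\delta(1+M)(1+s)^{-k-1}$, the dominant contribution coming from the term $a_t\cdot 1$. Integrating from $t$ to $\infty$ returns $|v|,|w|\le C\delta(1+M)(1+t)^{-k}$, and a further integration gives $|h|\le \frac{C\delta(1+M)}{k-1}(1+t)^{-(k-1)}$. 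Fixing any $M$ of order $1$ and shrinking $\delta_0=\delta_0(k,\theta)$ makes the map a self-map of $\mathcal B_M$ with the sharper bound $|h|\le\theta(1+t)^{-(k-1)}$, and the same estimate applied to the difference of two elements gives contraction; Banach's fixed-point theorem then produces the desired $(h,v,w)$.

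Finally, I verify that the fixed point actually solves (\ref{eqh}) classically. The identity $h_t=(v+w)/2$ is immediate from the third integral equation; differentiating it in $x$ and using the consequence $v_x+w_x=(v-w)_t$ of the first two equations (together with vanishing at infinity) yields $h_x=(v-w)/2$, so $v,w$ are indeed the Riemann invariants of $h$, and the two derivative bounds in (\ref{esh}) are just the $v,w$ bounds. To upgrade to $h\in C^2$, one differentiates the integral equations once more, using the $C^1$ control on $a$ and $a_t$ provided by (\ref{asa1}); the mixed and second derivatives can equivalently be recovered from (\ref{eqh}) itself. The main obstacle is the weighted book-keeping: one must check that the forcing $a_t\sim \delta(1+s)^{-k-1}$ is integrable ($k>1$ enters essentially here) and returns exactly the decay $(1+s)^{-k}$ for $v,w$ after one integration, and that $\delta_0$ can be chosen small enough—depending on $k$ and $\theta$—so that the self-map property and the target bound $|h|\le\theta(1+t)^{-(k-1)}$ hold simultaneously.
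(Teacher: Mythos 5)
Your construction is, in outline, exactly the paper's: the same Riemann invariants $v=h_t+h_x$, $w=h_t-h_x$, the same backward integration along the two characteristic families with vanishing data at $t=+\infty$ (your integral equations coincide with \eqref{eq23}--\eqref{eq24}), and the same weighted contraction with $\delta$ small. Your quantitative bookkeeping also matches Proposition \ref{Prop21}: the bound $|\Phi|\lesssim \delta(1+M)(1+s)^{-k-1}$ (using $2k\ge k+1$ for the quadratic-in-decay term $a(v+w)/2$, with $a_t\cdot 1$ dominant), one integration returning the rate $(1+t)^{-k}$ for $v,w$, and a second returning $(1+t)^{-(k-1)}$ for $h$, with $\delta_0=\delta_0(k,\theta)$ chosen to enforce the self-map and the sharp bound $|h|\le\theta(1+t)^{-(k-1)}$.

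The one genuine defect is your choice of Banach space: your norm controls only sup norms of $(h,v,w)$, so the fixed point that Banach's theorem hands you is merely continuous, and your verification step does not close as written. The identity $h_x=(v-w)/2$ is derived from $v_x+w_x=(v-w)_t$, which presupposes that $v$ and $w$ are differentiable in $x$; and the proposed repair --- ``differentiate the integral equations once more'' --- is circular, because $\partial_x\Phi$ contains $v_x$, $w_x$, $h_x$ themselves, so the differentiated equations form a system \emph{for} the unknown derivatives rather than a formula giving them. (Recovering the second derivatives ``from \eqref{eqh} itself'' has the same difficulty: at that stage you do not yet know that $h$ solves \eqref{eqh} classically.) The paper forestalls this by building $\partial_x$ into the norm from the start: its space $Y$ measures $\|\cdot\|_{\mathcal{B}^1}=\|\cdot\|_{\infty}+\|\partial_x\cdot\|_{\infty}$ and the iteration is contracted in that norm, the $\beta=1$ case of \eqref{asa1} supplying the bounds on $a_x$ and $a_{tx}$ needed for the derivative components; the resulting difference estimates have the same $O(\delta)$ smallness, so the contraction constants are unaffected. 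With that single enlargement of your space your argument is complete: the fixed point is $C^1$ in $x$, the characteristic form of \eqref{eq21}--\eqref{eq22} then gives $v,w\in C^1$ in $t$ as well, and $h_t=(v+w)/2$, $h_x=(v-w)/2$ yield $h\in C^2$ together with \eqref{esh}.
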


Using this $h$,
we can obtain a blow-up result for \eqref{eq11}.
To state our result precisely, we define the solution of \eqref{eq11}.
Let
$T\in (0,\infty]$.
We say that
$u\in X(T):=C([0,T);H^1(\mathbf{R}))\cap C^1([0,T);L^2(\mathbf{R}))$
is a solution of the initial value problem \eqref{eq11}
on the interval
$[0,T)$
if it holds that
\begin{align}
\label{sol}
	\lefteqn{\int_{[0,T)\times \mathbf{R}}
		u(t,x)(\partial_t^2\psi(t,x)-\partial_x^2\psi(t,x)-\partial_t(a(t,x)\psi(t,x)))dxdt}\notag\\
	&=\int_{\mathbf{R}}\left\{(a(0,x)u_0(x)+u_1(x))\psi(0,x)-u_0(x)\partial_t\psi(0,x)\right\}dx\\
	&\quad+\int_{[0,T)\times\mathbf{R}}|u(t,x)|^p\psi(t,x)dxdt\notag
\end{align}
for any
$\psi\in C_0^{2}([0,T)\times \mathbf{R})$.
In particular, when
$T=\infty$,
we call
$u$
a global solution.

We first recall a local existence result:
\begin{proposition}\label{prop1}
Let
$1<p<\infty$
and
$(u_0,u_1)\in H^1(\mathbf{R})\times L^2(\mathbf{R})$.
Then there exists $T^*\in (0,+\infty]$ and a unique solution
$u\in X(T^*)$.
Moreover, if
$T^*<+\infty$,
then it follows that
$$
	\lim_{t\to T^*-0}\|(u, u_t)(t)\|_{H^1\times L^2}=+\infty.
$$
\end{proposition}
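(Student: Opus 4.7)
The plan is to construct $u$ by the contraction mapping principle in $X(T)$ for $T>0$ small, and then to extract the blow-up alternative from a standard continuation argument.

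The first step is to develop the linear theory for $v_{tt}-v_{xx}+a(t,x)v_t=F$ with Cauchy data $(u_0,u_1)$. Multiplying by $v_t$ and integrating in $x$ gives the energy identity
\begin{equation*}
\frac{d}{dt}\int_{\mathbf{R}}(v_t^2+v_x^2)\,dx+2\int_{\mathbf{R}}a(t,x)v_t^2\,dx=2\int_{\mathbf{R}}Fv_t\,dx,
\end{equation*}
and the bound $|a|\le\delta$ together with Grönwall's inequality yield
\begin{equation*}
\|(v,v_t)(t)\|_{H^1\times L^2}\le C(T)\Bigl(\|(u_0,u_1)\|_{H^1\times L^2}+\int_0^t\|F(s)\|_{L^2}\,ds\Bigr),\qquad 0\le t\le T,
\end{equation*}
combined with the companion $L^2$ bound $\partial_t\|v\|_{L^2}^2\le 2\|v\|_{L^2}\|v_t\|_{L^2}$ to control $\|v\|_{L^2}$. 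A short mollification of $(u_0,u_1,F)$ and passage to the limit produce a unique $v\in X(T)$ satisfying the weak formulation \eqref{sol} (with $|u|^p$ replaced by $F$); this defines a bounded linear solution operator $S_a:H^1\times L^2\times L^1([0,T];L^2)\to X(T)$.

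The next step is to close the nonlinear estimate. The one-dimensional Sobolev embedding $H^1(\mathbf{R})\hookrightarrow L^\infty(\mathbf{R})$, together with the elementary inequality $||a|^p-|b|^p|\le p(|a|^{p-1}+|b|^{p-1})|a-b|$, gives
\begin{equation*}
\||u|^p\|_{L^2}\le C\|u\|_{H^1}^p,\qquad\||u|^p-|v|^p\|_{L^2}\le C\bigl(\|u\|_{H^1}^{p-1}+\|v\|_{H^1}^{p-1}\bigr)\|u-v\|_{H^1}.
\end{equation*}
Combining these with the linear estimate, the map $\Phi(u):=S_a(u_0,u_1,|u|^p)$ is a contraction on a closed ball of $X(T)$ centered at the free evolution, provided $T$ is chosen small enough depending on $\|(u_0,u_1)\|_{H^1\times L^2}$. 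The Banach fixed point theorem produces the unique local solution $u\in X(T)$.

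Finally, the blow-up alternative follows by the usual continuation argument: if $T^*<\infty$ but $\limsup_{t\to T^*-0}\|(u,u_t)(t)\|_{H^1\times L^2}<\infty$, then the existence time $T$ furnished by the fixed point construction, which depends only on the $H^1\times L^2$ norm of the data, can be used to restart the scheme from a time $t_0$ close to $T^*$ and extend $u$ past $T^*$, contradicting maximality. The only nonstandard point is that $a$ depends on both $t$ and $x$, so there is no fixed linear semigroup to invoke; however, since $|a|\le\delta$ is bounded uniformly in $(t,x)$ and $a\in C^2$, the energy method above handles this without difficulty, making the argument essentially routine.
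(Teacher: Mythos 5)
The paper itself does not prove Proposition \ref{prop1}; it simply cites Ikehata--Tanizawa \cite{IT}, so there is no internal argument to compare against line by line. Your route --- energy identity plus Gr\"onwall for the linear inhomogeneous problem, the one-dimensional embedding $H^1(\mathbf{R})\hookrightarrow L^{\infty}(\mathbf{R})$ to close the nonlinear estimate for every $p>1$ (which is exactly why no upper restriction on $p$ appears in the statement), contraction in $X(T)$, and continuation --- is the standard proof and is essentially what the cited reference carries out. One secondary point: the fixed point gives uniqueness only in a ball of $X(T)$ centered at the free evolution; to get uniqueness in all of $X(T^*)$ as asserted, apply the linear energy estimate to the difference of two solutions, using your Lipschitz bound on $|u|^p-|v|^p$ and Gr\"onwall. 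This is routine, but it should be stated, and it requires first showing (by the mollification you invoke for the linear theory) that any $u\in X(T)$ satisfying the weak formulation \eqref{sol} actually obeys the energy identity.

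The one genuine, though easily repaired, gap is in the blow-up alternative. You assume $\limsup_{t\to T^*-0}\|(u,u_t)(t)\|_{H^1\times L^2}<\infty$ and derive a contradiction; that proves only $\limsup_{t\to T^*-0}\|(u,u_t)(t)\|_{H^1\times L^2}=+\infty$, whereas the proposition asserts the full limit. To obtain the stated conclusion you must instead assume $\liminf_{t\to T^*-0}\|(u,u_t)(t)\|_{H^1\times L^2}<\infty$, pick a sequence $t_n\to T^*$ with $\|(u,u_t)(t_n)\|_{H^1\times L^2}\le M$, and restart the contraction at time $t_n$. This works precisely because your local existence time depends only on the size $M$ of the data and on the bounds for $a$ from \eqref{asa1}, which are uniform in $(t,x)$ on all of $[0,\infty)$; hence the restarted problems admit a common lower bound $T(M)>0$ on their existence time, and for $t_n$ within $T(M)$ of $T^*$ the solution extends past $T^*$, contradicting maximality and yielding the full limit.
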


For the proof, see for example \cite{IT}.
We put an assumption on the data
\begin{equation}
\label{eqini}
	\liminf_{R\to\infty}
	\int_{-R}^R((-g_t(0,x)+g(0,x)a(0,x))u_0(x)+g(0,x)u_1(x))dx>0,
\end{equation}
where $g$ is defined by (\ref{eqgh})
with
$h$
in Lemma \ref{lem1}.
Our main result is the following.

\begin{theorem}\label{th1}
Let
$1<p<\infty$.
Under the same situation as Lemma \ref{lem1},
let
$(u_0,u_1)\in H^1(\mathbf{R})\times L^2(\mathbf{R})$
satisfy \eqref{eqini}.
Then the local solution $u$ of \eqref{eq11}
blows up in finite time, that is,
$\lim_{t\to T^*-0}\|(u,u_t)(0)\|_{H^1\times L^2}=+\infty$
holds for some
$T^*\in (0,+\infty)$.
\end{theorem}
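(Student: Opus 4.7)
The plan is to execute the test-function method of Zhang \cite{Z}, adapted to the damped setting by Lin-Nishihara-Zhai \cite{LNZ}, with the multiplier $g = 1 + h$ from Lemma \ref{lem1} playing the essential role: it converts \eqref{eq11} into divergence form \eqref{eqgu} with the zero-order coefficient killed by \eqref{eqg}, and since $g \in [1-\theta,1+\theta]$ it preserves positivity of the nonlinearity.

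Assume for contradiction that $T^* = +\infty$. Fix $\eta \in C_0^\infty(\mathbf{R})$ with $0\le\eta\le 1$, $\eta\equiv 1$ on $[-1/2,1/2]$, $\mathrm{supp}\,\eta\subset [-1,1]$, $\eta'(0)=0$, and an integer $q>p'$. For $R\ge 1$ set $\phi_R(t,x) := \eta(t/R)^{2q}\eta(x/R)^{2q}$, and substitute $\psi := g\phi_R$ into the weak formulation \eqref{sol}. Using \eqref{eqg}, a direct calculation yields
\[
\psi_{tt}-\psi_{xx}-\partial_t(a\psi) = g(\phi_{R,tt}-\phi_{R,xx}) + (2g_t-ag)\phi_{R,t} - 2g_x\phi_{R,x}.
\]
Since $\eta'(0)=0$, one has $\phi_{R,t}(0,\cdot)=0$, so the $t=0$ boundary term in \eqref{sol} reduces to $\int((-g_t+ag)u_0+gu_1)(0,x)\phi_R(0,x)\,dx$, which by \eqref{eqini} and a suitable choice of $\phi_R(0,\cdot)$ exceeds a positive constant $c_0$ for all sufficiently large $R$. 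Rearranging \eqref{sol} and applying H\"older's inequality with exponents $p,p'$ to the right-hand side produces the master inequality
\[
I_R + c_0 \le I_R^{1/p}\,J_R^{1/p'},
\]
where $I_R := \int\!\!\int g|u|^p\phi_R\,dxdt$ and
\[
J_R := \int\!\!\int (g\phi_R)^{-(p'-1)}\bigl|g(\phi_{R,tt}-\phi_{R,xx})+(2g_t-ag)\phi_{R,t}-2g_x\phi_{R,x}\bigr|^{p'}\,dxdt.
\]

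Estimating $J_R$ is a scaling exercise. On the annular support of the derivatives of $\phi_R$ one has $t\sim R$, and Lemma \ref{lem1} together with \eqref{asa1} yield $|g_t|,|g_x|,|a|\lesssim R^{-k}$, while the natural bounds $|\phi_{R,t}|,|\phi_{R,x}|\lesssim R^{-1}\phi_R^{1-1/(2q)}$, $|\phi_{R,tt}|,|\phi_{R,xx}|\lesssim R^{-2}\phi_R^{1-1/q}$, and $g\in[1-\theta,1+\theta]$ give a leading contribution $\lesssim R^{2-2p'}$ from the term $g(\phi_{R,tt}-\phi_{R,xx})$ and a subordinate contribution $\lesssim R^{2-p'(1+k)}$ from the perturbative terms; since $k>1$, the latter is of smaller order. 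Therefore $J_R\lesssim R^{2-2p'}\to 0$ as $R\to\infty$, because $p>1$ forces $p'>1$. A one-line optimisation gives $\sup_{s\ge 0}(s^{1/p}J_R^{1/p'}-s) = \tfrac{p-1}{p}\,p^{-1/(p-1)}\,J_R\to 0$, so the master inequality has no non-negative solution $I_R$ for $R$ large enough, contradicting its derivation. Proposition \ref{prop1} then forces $T^*<+\infty$. The main obstacle will be to ensure the boundary term really inherits a uniform positive lower bound from \eqref{eqini}: the latter controls only an unweighted symmetric integral, whereas the test-function machinery naturally produces a weighted one, so a careful choice of $\phi_R(0,\cdot)$ (or passage to a subsequence $R_j\to\infty$ along which \eqref{eqini} is quantitatively realised) is required; the rest of the argument reduces to standard scaling identical to that for the damping-free $1$D wave equation.
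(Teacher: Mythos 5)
Your proposal is, in outline, the paper's own proof (Section 3): multiply by the $g=1+h$ of Lemma \ref{lem1} so that \eqref{eqg} puts the equation in divergence form, insert $g$ times a compactly supported cutoff into \eqref{sol} (your single scale $R$ is the paper's eventual choice $\tau=R$; the paper keeps two scales only to extract the lifespan bound in Section 6), estimate by H\"older, and contradict the positivity of the data term; your way of closing the contradiction (optimizing $s\mapsto s^{1/p}J_R^{1/p'}-s$ against a uniform $c_0>0$, instead of the paper's $I_{\tau,\tau}\le CD(\tau,\tau)^q\to 0$ forcing $u\equiv 0$) is an equivalent variant, and your identity for $\psi_{tt}-\psi_{xx}-\partial_t(a\psi)$ is correct. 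However, your estimate of $J_R$ contains a concrete error: the assertion that ``$t\sim R$ on the annular support of the derivatives of $\phi_R$'' is false for the term $2g_x\phi_{R,x}$, whose support is $\{R/2\le|x|\le R\}\times[0,R]$, on most of which $g_x$ enjoys no smallness, since by Lemma \ref{lem1} the bound $|g_x|\lesssim(1+t)^{-k}$ decays only in $t$. Writing $p'=p/(p-1)$ (the paper's $q$), the honest bound is
\[
\int_0^R\!\!\int_{R/2\le|x|\le R}(g\phi_R)^{-(p'-1)}\bigl|2g_x\phi_{R,x}\bigr|^{p'}\,dx\,dt
\;\lesssim\; R^{1-p'}\int_0^{\infty}(1+t)^{-kp'}\,dt\;\lesssim\;R^{1-p'},
\]
using $kp'>1$. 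This contribution in fact \emph{dominates} your claimed leading order $R^{2-2p'}$ (note $1-p'>2-2p'$), so your hierarchy is backwards: this is exactly the paper's $K_3$, whose $q$-th power $R^{1-q}$ is the largest term of $D(\tau,\tau)^q$. Since $1-p'<0$ for every $p>1$, one still gets $J_R\to 0$ and the contradiction goes through for all $1<p<\infty$, so the slip is repairable without any structural change; but as written the scaling count is wrong.

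The ``main obstacle'' you flag at the end is real but closes easily, and no subsequence is needed: take the spatial cutoff even and nonincreasing in $|x|$ (as your $\eta(x/R)^{2q}$ and the paper's $\phi_R$ may be chosen). By the layer-cake formula its superlevel sets are symmetric intervals $[-rR,rR]$ with $r\in[1/2,1]$, so the weighted boundary term equals an average $\int_0^1 G(r_{\lambda}R)\,d\lambda$ of the truncated integrals $G(\rho)=\int_{-\rho}^{\rho}\bigl((-g_t+ga)(0,x)u_0(x)+g(0,x)u_1(x)\bigr)dx$; and \eqref{eqini} says precisely that $G(\rho)\ge c>0$ for all sufficiently large $\rho$, which yields your uniform $c_0$. (This remark is also what rigorously justifies the paper's terse claim that $J_R>0$ for $R\ge R_0$, since the data are only in $H^1\times L^2$ and $G$ need not converge as $\rho\to\infty$.) With these two repairs your argument coincides with the paper's.
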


\begin{remark}
(i) For Lemma \ref{lem1},
our method does not work in higher dimensional cases
$n\ge 2$
and we have no idea to find an appropriate solution
$g$
of \eqref{eqg}.
(ii) We expect that the assumption on the smallness of
$\delta$
is removable.
\end{remark}

Theorem \ref{th1} is closely related to so-called {\em diffusion phenomenon},
which means that
the solution of the damped wave equation
\begin{equation}
\label{DW}
	u_{tt}-\Delta u+a(t,x)u_t=f(u),
\end{equation}
behaves like a solution of
the corresponding heat equation
$-\Delta v+a(t,x)v_t=f(v)$.
Here
$f(u)$
denotes a nonlinear term.

For the linear and constant coefficient case,
that is
$f(u)=0$ and $a\equiv 1$,
the asymptotic behavior of the solution was initiated by
Matsumura \cite{M}.
He showed that
some decay rates of solution are same as that of corresponding heat equation
and applied these estimates to semilinear problems.
After that more specific asymptotics were given by
\cite{HM, MN, HO, N1, Na}.
They showed that the asymptotic profile of solution is
actually given by that of the corresponding heat equation.

For the linear and variable coefficient case,
Mochizuki \cite{Mo}
proved that if $a(t,x)$ has bounded derivatives and satisfies
$a(t,x)\lesssim (1+|x|)^{-k}$
with some
$k>1$,
then the solution of \eqref{DW} is asymptotically equivalent to
a solution of the free wave equation $w_{tt}-\Delta w=0$.
This means that
if the damping term decays sufficiently fast,
then the friction becomes non-effective
and the equation recovers its hyperbolic structure.
Wirth \cite{W1, W2} considered time-dependent dampings,
for example,
$a=(1+t)^{-k}$.
He showed that
if $k>1$ (resp. $k<1$), then the asymptotic profile of solution is given by
that of the free wave equation (resp. the corresponding heat equation).
When the space-dependent damping case
$a=a(x)\sim \langle x\rangle^{-\alpha}$
with $0\le\alpha <1$,
Todorova and Yordanov \cite{TY3}
obtained an energy decay estimate
of the solution by a weighted energy method.
The decay rate they obtained is almost same
as that of the corresponding heat equation.
This shows that in this case the equation has a diffusive structure
(see also \cite{ITY2, KK}).

For the semilinear case
$f(u)=|u|^p$,
the results \cite{TY1, Z, ITY, LNZ} we mentioned above
show the critical exponents coincide those of the corresponding
semilinear heat equations
(see \cite{F} for the heat equation).

On the contrary,
by the results of \cite{Mo, W1},
it is expected that
when the damping term decays sufficiently fast,
the critical exponent agrees with that of the wave equation.
However, we do not know any results for this conjecture
(see \cite{Wa2} for a partial result).
In particular,
in one-dimensional case,
Kato \cite{K} proved that
the critical exponent of the wave equation is given by $+\infty$,
that is,
the blow-up result holds for any
$1<p<\infty$.
Therefore, Theorem \ref{th1} can be interpreted as an affirmative answer
for this conjecture in one-dimensional case.

We can also treat other types of damping.
We give two examples.
These examples have the shape
$a(t,x)=\mu/(1+t)+b(t,x)$,
here
$b$
denotes a perturbation term.
The wave equation with the damping term
$\frac{\mu}{1+t}u_t$
was investigated by
\cite{Wi04, D, DL, Wa2}.
Wirth \cite{Wi04} obtained several
$L^p$-$L^q$
estimates of solutions to the linear problem.
Using these estimates,
recently,
D'Abbicco \cite{D} proved several global existence results
(see also \cite{DL} for blow-up results).
The author \cite{Wa2} also obtained a certain global existence result
by a weighted energy method.

The first example is the case that
$a(t,x)$
is a perturbation of
$2/(1+t)$,
that is,
$a$
is given by
\begin{equation}
\label{eqa2}
	a(t,x)=\frac{2}{1+t}+b(t,x)
\end{equation}
and $b(t,x)\in C^2([0,\infty)\times\mathbf{R})$ satisfies \eqref{asa1}.

In this case by putting
\begin{equation}
\label{eqg2}
	g(t,x)=(1+t)(1+h(t,x)),
\end{equation}
the equation \eqref{eqh} becomes
\begin{equation}
\label{eqh2}
	h_{tt}-h_{xx}-b(t,x)h_t-\left(\frac{b(t,x)}{1+t}+b_t(t,x)\right)(1+h)=0.
\end{equation}
In the same way as in the proof of Lemma 1.1,
we can obtain a solution
$h$
of \eqref{eqh2}:
\begin{lemma}\label{lem2}
Let
$\theta\in (0,1)$
and
$k>1$.
Then there exists
$\delta_0>0$
such that for all $\delta\in (0,\delta_0)$
the following holds:
if $b$ satisfies \eqref{asa1} and
$a$
is given by \eqref{eqa2}, then
there exists a solution $h\in C^2([0,\infty)\times \mathbf{R})$
of (\ref{eqh2}) satisfying
\begin{equation}
\label{esh2}
	|h(t,x)|\le \frac{\theta}{(1+t)^{k-1}},\quad
	|\partial_{t}^{\alpha}\partial_x^{\beta}h(t,x)|\le \frac{C}{(1+t)^k}
	\quad (\alpha+\beta= 1)
\end{equation}
for all
$(t,x)\in [0,\infty)\times\mathbf{R}$
with some constant
$C>0$.
\end{lemma}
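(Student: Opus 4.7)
The plan is to follow verbatim the characteristics argument used to prove Lemma \ref{lem1}. Equation \eqref{eqh2} has exactly the same shape as \eqref{eqh}: the coefficient of $h_t$ is $b$ instead of $a$, and the inhomogeneous coefficient in front of $(1+h)$ is $b/(1+t)+b_t$ instead of $a_t$. Since $b$ satisfies \eqref{asa1} with the same constants, both $b/(1+t)$ and $b_t$ are bounded in absolute value by $\delta/(1+t)^{k+1}$, which is precisely the bound played by $a_t$ in Lemma \ref{lem1}; the additional factor $1/(1+t)$ in the first piece only improves estimates.

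Concretely, I would diagonalise the wave operator by setting $v=h_t+h_x$ and $w=h_t-h_x$, turning \eqref{eqh2} into the semilinear first-order system
\begin{align*}
(\partial_t-\partial_x)v &= \tfrac{b}{2}(v+w)+F(t,x,h),\\
(\partial_t+\partial_x)w &= \tfrac{b}{2}(v+w)+F(t,x,h),
\end{align*}
where $F=\bigl(b/(1+t)+b_t\bigr)(1+h)$. Because $k>1$, the decay is enforced by integrating along characteristics from $t=\infty$: for instance,
\[
v(t,x)=-\int_t^\infty\Bigl[\tfrac{b}{2}(v+w)+F\Bigr]\bigl(s,x-(s-t)\bigr)\,ds,
\]
and analogously for $w$ with $x+(s-t)$ in place of $x-(s-t)$; the function $h$ is then recovered from $h_t=(v+w)/2$ by a further backward integration in $t$ at fixed $x$.

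I would set up a Picard iteration on the weighted space
\[
X=\Bigl\{(h,v,w)\in C^0([0,\infty)\times\mathbf{R})^3:\ \sup_{t,x}\bigl[(1+t)^{k-1}|h|+(1+t)^k(|v|+|w|)\bigr]<\infty\Bigr\}
\]
restricted to a ball of radius $\simeq\theta$. Inserting the ansatz into the integral equations yields $|v|,|w|\le C\delta(1+t)^{-k}$ from $\int_t^\infty(1+s)^{-(k+1)}\,ds\lesssim(1+t)^{-k}$, and then $|h|\le C\delta(1+t)^{-(k-1)}$ from $\int_t^\infty(1+s)^{-k}\,ds\lesssim(1+t)^{-(k-1)}$. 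Choosing $\delta_0$ so that $C\delta_0<\theta$ closes the ball, and the same computation applied to differences of two inputs yields a contraction constant of size $O(\delta)$. The $C^2$ regularity and the pointwise bounds on the first derivatives in \eqref{esh2} follow by differentiating the integral formulas and invoking the estimates of \eqref{asa1} on $b_x,b_{tx}$.

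The main technical point is that one must integrate backward from $t=\infty$ rather than forward from $t=0$: a forward solution of the inhomogeneous one-dimensional wave equation with a source of size $(1+t)^{-(k+1)}$ and non-compact spatial support can grow linearly in $t$, whereas the backward construction automatically selects the unique bounded solution. Once this choice is made, the rest is bookkeeping: the smallness of $\delta$ closes the contraction, the structural similarity with the proof of Lemma \ref{lem1} delivers regularity, and the only new ingredients (the term $b/(1+t)$ in the source and the replacement of $a$ by $b$ in the coefficient of $h_t$) satisfy the same pointwise bounds used there.
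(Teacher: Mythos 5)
Your proposal is correct and takes essentially the same route as the paper: there, Lemma \ref{lem2} is proved by reducing to the argument of Lemma \ref{lem1}, with the single observation that $|b(t,x)|\le \delta(1+t)^{-k}$ and $\left|\frac{b(t,x)}{1+t}+b_t(t,x)\right|\le \frac{2\delta}{(1+t)^{k+1}}$, so the new coefficients satisfy exactly the bounds played by $a$ and $a_t$ in Section 2. Your diagonalization, backward integration from $t=\infty$, weighted space, and $O(\delta)$ contraction reproduce that Section 2 scheme verbatim.
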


Using this $h$,
we can apply a test function method and obtain
a blow-up result.
\begin{theorem}\label{th2}
Let
$1<p\le 3$.
Under the same situation as Lemma \ref{lem2},
let $(u_0,u_1)\in H^1(\mathbf{R})\times L^2(\mathbf{R})$
satisfy \eqref{eqini}.
Then the local solution $u$ of \eqref{eq11}
blows up in finite time.
\end{theorem}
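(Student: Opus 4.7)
The strategy parallels the proof of Theorem \ref{th1}: we use Lemma \ref{lem2} to construct the strictly positive $C^2$ multiplier $g(t,x)=(1+t)(1+h(t,x))$ and apply a test function argument to equation \eqref{eq11} multiplied by $g$. The first task is to verify the algebraic identity $g_{tt}-g_{xx}-(ag)_t=0$; a direct computation shows that after dividing by $1+t$ this reduces exactly to \eqref{eqh2}, which $h$ satisfies by Lemma \ref{lem2}. Since $|h|<\theta<1$, one has $g\ge (1+t)(1-\theta)>0$ on $[0,\infty)\times\mathbf{R}$.

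Assuming for contradiction that $T^{*}=\infty$, I would plug $g\psi$, for $\psi\in C^2_0([0,\infty)\times\mathbf{R})$ with $\psi_t(0,\cdot)\equiv 0$, into the weak formulation \eqref{sol}. Expanding $(g\psi)_{tt}-(g\psi)_{xx}-\partial_t(ag\psi)$ and using the identity for $g$, the zeroth-order term in $\psi$ cancels, leaving
\begin{equation*}
  \iint g|u|^{p}\psi\,dx\,dt + B_{0}(\psi) = \iint u\bigl[g(\psi_{tt}-\psi_{xx})+(2g_{t}-ag)\psi_{t}-2g_{x}\psi_{x}\bigr]\,dx\,dt,
\end{equation*}
where $B_{0}(\psi)=\int_{\mathbf{R}}\bigl[(a(0,x)g(0,x)-g_{t}(0,x))u_{0}(x)+g(0,x)u_{1}(x)\bigr]\psi(0,x)\,dx$. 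Taking $\psi=\psi_{R}(t,x):=\eta(t/R)^{2q}\varphi(x/R)^{2q}$, with integer $q\ge p/(p-1)$ and standard smooth non-negative cutoffs $\eta,\varphi$ equal to $1$ on $[-1/2,1/2]$, supported in $[-1,1]$, and with $\eta'(0)=0$, assumption \eqref{eqini} yields $B_{0}(\psi_{R})\ge c_{0}>0$ for all $R$ sufficiently large.

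Applying H\"older's inequality with exponents $p$ and $p'=p/(p-1)$ to the right-hand side gives $I(R)+c_{0}\le I(R)^{1/p}J(R)^{1/p'}$, where $I(R)=\iint g|u|^{p}\psi_{R}$ and $J(R)$ is the corresponding $L^{p'}$-integral with weight $g^{-p'/p}\psi_{R}^{-p'/p}$. On the support of $\psi_{R}$, $g\sim 1+t\lesssim R$, and by Lemma \ref{lem2}, $|g_{x}|,|2g_{t}-ag|\lesssim (1+t)^{1-k}$. A scaling count shows the dominant $\psi_{tt},\psi_{xx}$ contribution to be of order $R^{3-2p'}$, while the remaining terms carry the extra factor $(1+t)^{(1-k)p'}$ and are strictly subdominant for $k>1$. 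Since $3-2p'<0$ precisely for $p<3$, Young's inequality together with $J(R)\to 0$ contradicts $c_{0}>0$ and proves blow-up for $1<p<3$.

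The critical case $p=3$, where $J(R)=O(1)$, is the main obstacle. To handle it I would invoke the standard refinement: the basic inequality then forces $I(R)$ to remain uniformly bounded in $R$, so that $g|u|^{p}\in L^{1}([0,\infty)\times\mathbf{R})$. The derivatives of $\psi_{R}$ vanish on $\{t<R/2\}\cap\{|x|<R/2\}$, so the right-hand side integral is actually taken over an annular region; re-running H\"older there replaces $I(R)^{1/3}$ by the cube root of the tail $\iint_{\mathrm{ann}}g|u|^{p}$, which tends to $0$ as $R\to\infty$ by dominated convergence, once more contradicting $c_{0}>0$ and finishing the argument. The only remaining point is to verify that all perturbative contributions from $b$ and from the $h$-dependent part of $g$ are indeed subordinate; this is a straightforward bookkeeping enabled by \eqref{esh2} and $k>1$.
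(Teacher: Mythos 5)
Your proposal is correct and follows essentially the same route as the paper: the multiplier $g=(1+t)(1+h)$ from Lemma \ref{lem2}, the test-function method with equal scaling in $t$ and $x$ (the paper's $R=\tau$), the exponent count $R^{1-3/p}$ forcing blow-up for $p<3$, and, at $p=3$, exactly the paper's refinement of first deducing $g|u|^3\in L^1$ and then re-running H\"older over the annular supports of the cutoff derivatives so that the tails vanish by integrability. Your only deviations are cosmetic: you exploit the cancellation $2g_t-ag=(1+t)\bigl(2h_t-b(1+h)\bigr)=O\bigl((1+t)^{1-k}\bigr)$, which is slightly sharper than the paper's bound for the corresponding term $K_4$, and you gloss over the logarithmic/power cases in the $g_x\psi_x$ term that the paper records in $F(\tau)$, though your conclusion that it is subdominant for $k>1$ is correct.
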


The second example is
\begin{equation}
\label{eqa3}
	a(t,x)=\frac{\mu}{1+t}+b(t,x)
\end{equation}
with
$\mu>0$
and
$b(t,x)\in C^2([0,\infty)\times \mathbf{R})$
satisfying \eqref{asa1}.
By putting
\begin{equation}
\label{eqg3}
	g(t,x)=(1+t)^{\mu}(1+h(t,x)),
\end{equation}
we have
\begin{equation}
\label{eqh3}
	h_{tt}-h_{xx}+\left(\frac{\mu}{1+t}-b\right)h_t
	-\left(\frac{\mu}{1+t}b+b_t\right)(1+h)=0.
\end{equation}
In a similar way to Lemma \ref{lem1}
with some technical argument,
we can find an appropriate solution
$h$
of \eqref{eqh3}.

\begin{lemma}\label{lem3}
Let
$\theta\in (0,1), \mu>0$
and
$k>\max\{ 1, \mu \}$.
Then there exists
$\delta_0>0$
such that for all $\delta\in (0,\delta_0)$
the following holds:
if $b$ satisfies \eqref{asa1} and
$a$
is given by \eqref{eqa3}, then
there exists a solution $h\in C^2([0,\infty)\times \mathbf{R})$
of (\ref{eqh3}) satisfying
\begin{equation}
	|h(t,x)|\le \frac{\theta}{(1+t)^{k-1}},\quad
	|\partial_{t}^{\alpha}\partial_x^{\beta}h(t,x)|\le \frac{C}{(1+t)^k}
	\quad (\alpha+\beta= 1)
\end{equation}
for all
$(t,x)\in [0,\infty)\times\mathbf{R}$
with some constant
$C>0$.
\end{lemma}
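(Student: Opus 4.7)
The plan is to adapt the characteristic-method proof of Lemma~\ref{lem1}, keeping in mind that the coefficient $A(t,x) := \mu/(1+t) - b(t,x)$ of $h_t$ in (\ref{eqh3}) is no longer integrable in $t$ as $b$ alone would be; the non-integrable head $\mu/(1+t)$ forces a new ingredient. Setting also $B(t,x) := \mu b/(1+t) + b_t$ and introducing the Riemann invariants $p = h_t + h_x$, $q = h_t - h_x$, I would recast (\ref{eqh3}) as the first-order system
\begin{align*}
(\partial_t - \partial_x)p &= -\tfrac{A}{2}(p+q) + B(1+h),\\
(\partial_t + \partial_x)q &= -\tfrac{A}{2}(p+q) + B(1+h),\\
\partial_t h &= (p+q)/2,
\end{align*}
to be integrated along the characteristic lines $s\mapsto(s,x\mp(t-s))$.

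To handle the non-integrable damping along these characteristics I would use the integrating factor with leading behavior $(1+s)^{\mu/2}$. The crucial point is that forward integration from $t=0$ with trivial Cauchy data yields at best $O((1+t)^{-\mu/2})$ decay, which is too slow; instead one has to solve \emph{backward} from $t=\infty$, imposing that $h,p,q$ vanish at infinity, leading to
\[
p(t,x) = -(1+t)^{-\mu/2}\!\int_t^\infty\!(1+s)^{\mu/2}\Bigl[\tfrac{b}{2}(p+q)-\tfrac{\mu}{2(1+s)}q + B(1+h)\Bigr]\!(s, x-(s-t))\,ds
\]
and a mirror formula for $q$ along the opposite characteristic, together with $h(t,x) = -\tfrac12\int_t^\infty(p+q)(s,\cdot)\,ds$. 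Under the ansatz $|h|\le\theta(1+t)^{-(k-1)}$, $|p|+|q|\le C(1+t)^{-k}$, the bracket is $O(\delta(1+s)^{-k-1})$, so the integrand is $O(\delta(1+s)^{\mu/2-k-1})$; using $k>\mu$ this is integrable at infinity with tail $O((1+t)^{\mu/2-k})$, and the prefactor $(1+t)^{-\mu/2}$ converts it into the desired $O((1+t)^{-k})$ bound for $p,q$. One further backward integration gives $|h|=O((1+t)^{-(k-1)})$.

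I would close the argument by a Banach fixed-point iteration in the weighted space
\[
X = \bigl\{(h,p,q)\in C^1 : \sup_{t,x}(1+t)^{k-1}|h|+\sup_{t,x}(1+t)^k(|p|+|q|)<\infty\bigr\},
\]
showing, via the estimates above and the smallness of $\delta$, that the map sending any $(h,p,q)\in X$ to the triple defined by the representation formulas is a contraction on a small ball; the $C^2$ regularity and the derivative bound $|\partial_t^\alpha\partial_x^\beta h|\le C(1+t)^{-k}$ for $\alpha+\beta=1$ then follow by differentiating the integral identities. The step I expect to be the main obstacle is the delicate bookkeeping of the integrating factor $(1+s)^{\mu/2}$ against the weights in $X$—in particular, the $-\tfrac{\mu}{2(1+s)}q$ term inside the bracket, itself of size $(1+s)^{-k-1}$, must be controlled without losing decay—which is precisely what forces both the backward-in-time formulation and the strengthened assumption $k>\mu$ (rather than merely $k>1$ as in Lemma~\ref{lem1}).
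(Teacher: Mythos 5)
Your proposal follows the same backbone as the paper's proof of Lemma \ref{lem3} (via Proposition \ref{prop51}): Riemann invariants $v_1=h_t+h_x$, $v_2=h_t-h_x$, backward integration from $t=\infty$ along characteristics, and a fixed-point argument in a weighted space, with $k>\mu$ entering exactly where you locate it. The one genuine difference is the treatment of the non-integrable coefficient $\mu/(1+t)$. You remove its diagonal part with the integrating factor $(1+s)^{\mu/2}$ and keep the off-diagonal remainder $-\tfrac{\mu}{2(1+s)}q$ (resp.\ $p$) as a source; along the characteristic this contributes the Lipschitz constant $\tfrac{\mu}{2}(1+t)^{-\mu/2}\int_t^\infty(1+s)^{\mu/2-k-1}ds=\tfrac{\mu}{2k-\mu}(1+t)^{-k}\cdot(1+t)^{k}$, i.e.\ $\tfrac{\mu}{2k-\mu}$, which is $<1$ iff $k>\mu$. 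The paper uses no integrating factor at all: in the iteration \eqref{eq51} the full term $\tfrac{\mu}{2(1+s)}(v_1^{(n-1)}+v_2^{(n-1)})$ is kept inside the Picard map, and $\int_t^\infty\tfrac{\mu}{1+s}(1+s)^{-k}ds=\tfrac{\mu}{k}(1+t)^{-k}$ produces the constant $\tfrac{\mu}{k}$, again $<1$ iff $k>\mu$. Both devices work, and yours even yields the slightly smaller constant ($\tfrac{\mu}{2k-\mu}<\tfrac{\mu}{k}$ whenever $k>\mu$), at the price of an extra boundary-term check at $s=\infty$ (harmless, since $k>\mu/2$).

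Two points in your write-up need repair, though both are fixable. First, your interim claim that the bracket is $O(\delta(1+s)^{-k-1})$ is wrong for the term $-\tfrac{\mu}{2(1+s)}q$, which is $O(\mu(1+s)^{-k-1})$ with \emph{no} factor of $\delta$ --- as you yourself observe at the end. Consequently the invariance of the ball cannot come from $\delta$-smallness alone; it must have the structure $\tfrac{\mu}{2k-\mu}\theta'+C\delta\le\theta'$, which is how the paper sets things up by taking $\theta'=\theta\min\{k-1,1\}$ in \eqref{Ktheta}. Second, in your unweighted norm on $X$ the map is not literally a contraction: the feedback $h=-\tfrac12\int_t^\infty(p+q)\,ds$ carries the factor $\tfrac{1}{k-1}$, and the resulting constant $\bigl(1+\tfrac{1}{k-1}\bigr)\tfrac{\mu}{2k-\mu}$ exceeds $1$ when $k$ is close to $1$ (e.g.\ $\mu=3/2$, $k=8/5$). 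The iteration matrix still has spectral radius $\tfrac{\mu}{2k-\mu}+O(\delta)<1$, so the scheme converges, but to phrase it as a Banach fixed point one must weight the $(p,q)$-components by a large parameter $\lambda$ chosen before $\delta$ --- exactly the paper's modified $Y$-norm, which yields the contraction factor $\bigl(1+\tfrac{1}{2\lambda(k-1)}\bigr)\bigl(\tfrac{\mu}{k}+\delta\lambda C\bigr)<1$. With these two adjustments your argument closes and is a legitimate variant of the paper's proof.
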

This lemma and the test function method imply

\begin{theorem}\label{th3}
Let
$1<p\le 1+2/\mu$
Under the same situation as Lemma \ref{lem3},
let $(u_0,u_1)\in H^1(\mathbf{R})\times L^2(\mathbf{R})$
satisfy \eqref{eqini}.
Then the local solution $u$ of \eqref{eq11}
blows up in finite time.
\end{theorem}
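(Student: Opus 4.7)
The plan is to adapt the test function method of Zhang to the weight $g(t,x)=(1+t)^{\mu}(1+h(t,x))$ defined in \eqref{eqg3}, where $h$ is furnished by Lemma \ref{lem3}. Since $|h|\le\theta<1$, the function $g$ is strictly positive and comparable to $(1+t)^{\mu}$; by its construction through \eqref{eqh3}, $g$ solves \eqref{eqg}, namely $g_{tt}-g_{xx}-(ga)_{t}=0$.

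First, I would argue by contradiction and assume $T^{\ast}=\infty$. Fix smooth even cutoffs $\eta,\chi\in C^{\infty}(\mathbf{R})$, non-increasing on $[0,\infty)$, identically $1$ on $[-\tfrac12,\tfrac12]$, vanishing outside $[-1,1]$, with $\eta'\equiv 0$ near $0$; for $R>0$ and a large integer $q$ to be fixed later, set $\psi_{R}(t,x):=\eta(t/R)^{q}\chi(x/R)^{q}$ and $\Psi_{R}:=g\psi_{R}$. Then $\Psi_{R}\in C_{0}^{2}([0,\infty)\times\mathbf{R})$ and $\partial_{t}\psi_{R}(0,\cdot)\equiv 0$. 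A direct expansion using \eqref{eqg} gives
\[
\Psi_{tt}-\Psi_{xx}-\partial_{t}(a\Psi) = g(\psi_{R})_{tt} - g(\psi_{R})_{xx} - 2g_{x}(\psi_{R})_{x} + (2g_{t}-ag)(\psi_{R})_{t},
\]
and inserting $\Psi_{R}$ into \eqref{sol} yields, with $J_{R}:=\bigl(\int g|u|^{p}\psi_{R}\,dx\,dt\bigr)^{1/p}$,
\[
J_{R}^{p} + I_{0}(R) = \int u\bigl[g(\psi_{R})_{tt} - g(\psi_{R})_{xx} - 2g_{x}(\psi_{R})_{x} + (2g_{t}-ag)(\psi_{R})_{t}\bigr]\,dx\,dt,
\]
where $I_{0}(R):=\int\bigl[(-g_{t}+ga)(0,\cdot)u_{0}+g(0,\cdot)u_{1}\bigr]\psi_{R}(0,x)\,dx\ge c_{0}>0$ for large $R$ by \eqref{eqini}.

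Next, I would apply H\"older's inequality with conjugate exponents $(p,p')$ to each of the four summands on the right, factoring out $(g|u|^{p}\psi_{R})^{1/p}$, to obtain
\[
J_{R}^{p}+c_{0} \le J_{R}\cdot\Lambda(R),
\]
where $\Lambda(R)$ collects weighted $L^{p'}$-norms of the test-function derivatives. Choosing $q>2p'$ and using $g\sim(1+t)^{\mu}$, $|g_{x}|\lesssim(1+t)^{\mu-k}$, and $|2g_{t}-ag|\lesssim(1+t)^{\mu-1}$ (all consequences of Lemma \ref{lem3} combined with \eqref{eqa3}), a scaling count on the support of $\partial\psi_{R}$ — where $(1+t)\sim R$ and the area is $\sim R^{2}$ — gives
\[
\Lambda(R) \lesssim R^{(\mu+2-2p')/p'} + R^{(\mu+2-(k+1)p')/p'}.
\]
The first exponent is $\le 0$ exactly when $p\le 1+2/\mu$, which is the hypothesis of the theorem; the second is strictly negative because $k>1$.

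In the subcritical range $1<p<1+2/\mu$, $\Lambda(R)\to 0$, and Young's inequality absorbs $J_{R}\Lambda(R)$ into $\tfrac12 J_{R}^{p}$, forcing $c_{0}\le 0$ — a contradiction. In the critical case $p=1+2/\mu$, $\Lambda(R)$ is merely bounded, so $J_{R}$ is uniformly bounded in $R$ and $g|u|^{p}\in L^{1}([0,\infty)\times\mathbf{R})$; repeating the H\"older step with integration restricted to the annular support $\{R/2\le t\le R\}\cup\{R/2\le|x|\le R\}$ of $\partial\psi_{R}$ replaces the global $J_{R}$ on the right by its restriction to a shrinking set, which tends to $0$ by dominated convergence, and the contradiction persists. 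The main obstacle — absent from the proof of Theorem \ref{th1} where $\mu=0$ — is the coefficient $2g_{t}-ag=\mu(1+t)^{\mu-1}(1+h)+O((1+t)^{\mu-k})$, which is only one power of $(1+t)$ smaller than $g$ itself; the resulting term $(2g_{t}-ag)(\psi_{R})_{t}$ scales exactly like $g(\psi_{R})_{tt}$, fixing the critical exponent at $1+2/\mu$ and explaining the restriction $p\le 1+2/\mu$ in the theorem.
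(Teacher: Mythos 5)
Your proposal is correct and follows essentially the same route as the paper: multiply through by $g=(1+t)^{\mu}(1+h)$ with $h$ from Lemma \ref{lem3} to reach divergence form, run the test function method with time and space scales coupled ($R=\tau$), note that the data term stays positive while every exponent is nonpositive precisely when $p\le 1+2/\mu$ (your observation that $(2g_t-ag)(\psi_R)_t$ scales exactly like $g(\psi_R)_{tt}$ is the same bookkeeping that makes the paper's $K_4$ comparable to $K_1$), and handle the critical case $p=1+2/\mu$ by restricting the H\"older factor to the annular support of $\partial\psi_R$ and using dominated convergence, which is exactly the paper's Section 4 modification invoked in its proof of Theorem \ref{th3}. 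One cosmetic caveat: your bound $R^{(\mu+2-(k+1)p')/p'}$ for the $g_x$-contribution to $\Lambda(R)$ corresponds only to the regime $\mu-kp'>-1$ of the paper's case function $G(\tau)$; when $\mu-kp'\le -1$ the time integral converges (or yields a logarithm) and the correct bound is $R^{-1+1/p'}$, up to a factor $(\log R)^{1/p'}$ in the borderline case --- still a strictly negative power under $k>1$, so the conclusion is unaffected.
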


\begin{remark}
When
$\mu=2$,
Theorem \ref{th2}
is better than Theorem \ref{th3}.
\end{remark}

At the end of this section, we explain some notation and terminology
used throughout this paper.
We put
$$
	\|f\|_{L^p(\mathbf{R}^n)}:=
	\left(\int_{\mathbf{R}^n}|f(x)|^pdx\right)^{1/p}
$$
for
$1<p<\infty$
and
$\|f\|_{\infty}:=\sup_{x\in\mathbf{R}}|f(x)|$.
We denote the usual Sobolev space by
$H^1(\mathbf{R})$.
For an interval
$I$
and a Banach space
$X$,
we define
$C^r(I;X)$
as the Banach space whose
element is an
$r$-times continuously differentiable mapping from
$I$
to
$X$
with respect to the topology in
$X$
(if $I$ is semi-open or closed interval,
the differential at the endpoint is interpreted as one-sided derivative).
The letter $C$ indicates the generic constant, which may change from line to line. 
We also use the symbols $\lesssim$ and $\sim$.
The relation $f\lesssim g$ means $f\le Cg$ with some constant $C>0$
and $f\sim g$ means $f\lesssim g$ and $g\lesssim f$.

\section{Proof of Lemma \ref{lem1}}
In this section, we construct a solution of (\ref{eqh})
by the method of characteristics.
First, we diagonalize (\ref{eqh}).
Put
$$
	\left(\begin{array}{c}v_1\\ v_2\end{array}\right)
	=\left(\begin{array}{c}h_t+h_x\\ h_t-h_x\end{array}\right).
$$
Then $v_1, v_2$ satisfies
\begin{equation}
\label{eq21}
	\partial_tv_1=
	\partial_xv_1+\frac{a(t,x)}{2}(v_1+v_2)+a_t(t,x)(1+h)
\end{equation}
and
\begin{equation}
\label{eq22}
	\partial_tv_2=
	-\partial_xv_2+\frac{a(t,x)}{2}(v_1+v_2)+a_t(t,x)(1+h),
\end{equation}
respectively.
We can rewrite \eqref{eq21}-\eqref{eq22} as
\begin{align*}
	\partial_t(v_1(t,x-t))&=\frac{a(t,x-t)}{2}(v_1(t,x-t)+v_2(t,x-t))+a_t(t,x-t)(1+h(t,x-t)),\\
	\partial_t(v_2(t,x+t))&=\frac{a(t,x+t)}{2}(v_1(t,x+t)+v_2(t,x+t))+a_t(t,x+t)(1+h(t,x+t)).
\end{align*}
We seek solutions satisfying
$\lim_{t\to +\infty}(v_1, v_2)=0$
uniformly in
$x$.
Integrating the above identities over
$[t,\infty)$
and changing variables,
one has a system of integral equation
\begin{align}
\label{eq23}
	v_1(t,x)&=-\int_t^{\infty}\left\{
		\frac{a}{2}(v_1+v_2)+a_t(1+h)\right\}(s,x+t-s)ds,\\
\label{eq24}v_2(t,x)&=-\int_t^{\infty}\left\{
	\frac{a}{2}(v_1+v_2)+a_t(1+h)\right\}(s,x-(t-s))ds.
\end{align}
Next, we construct solutions to \eqref{eq23}, \eqref{eq24} by
an iteration argument in an appropriate Banach space.
We define a function space
$Y$.
We say
$V=(v_1,v_2,h)\in Y$
if
$V\in \left(C([0,\infty)\times\mathbf{R})\right)^3$,
$V$
is differentiable with respect to
$x$
for all
$(t,x)\in [0,\infty)\times\mathbf{R}$,
$\partial_xV\in \left(C([0,\infty)\times\mathbf{R})\right)^3$,
and
$\|V\|_Y=\|(v_1,v_2,h)\|_Y<+\infty$,
where
\begin{align*}
	\|(v_1,v_2,h)\|_Y&=\sup_{t\in [0,\infty)}\left\{
		(1+t)^k\|v_1(t)\|_{\mathcal{B}^1}+(1+t)^k\|v_2(t)\|_{\mathcal{B}^1}
		+(1+t)^{k-1}\|h(t)\|_{\mathcal{B}^1}\right\},\\
	\|h(t)\|_{\mathcal{B}^1}&=\|h(t,\cdot)\|_{\infty}
		+\|\partial_xh(t,\cdot)\|_{\infty}.
\end{align*}
Then
$Y$
is a Banach space with norm
$\|V\|_Y$.
Let
$\theta\in (0,1)$
and let
\begin{align*}
	K_{\theta}=&\{ (v_1,v_2,h)\in Y \mid
	\sup_{t\in[0,\infty)}(1+t)^k\|v_1(t)\|_{\infty}\le\theta,\\
	&\sup_{t\in [0,\infty)}(1+t)^k\|v_2(t)\|_{\infty}\le\theta,\ \ 
	\sup_{t\in [0,\infty)}(1+t)^k\|h(t)\|_{\infty}\le\theta
	\}.
\end{align*}
Take
$(v_1^{(0)}, v_2^{(0)}, h^{(0)} )\in K_{\theta}$
arbitrarily and define
$V^{(n)}=(v_1^{(n)}, v_2^{(n)}, h^{(n)})$
inductively by
\begin{align}
\label{eq25}
	v_1^{(n)}(t,x)&=
		-\int_t^{\infty}
		\left\{\frac{a}{2}(v_1^{(n-1)}+v_2^{(n-1)})+a_t(1+h^{(n-1)})\right\}(s,x+t-s))ds,\\
\nonumber
	v_2^{(n)}(t,x)&=
		-\int_t^{\infty}
		\left\{\frac{a}{2}(v_1^{(n-1)}+v_2^{(n-1)})+a_t(1+h^{(n-1)})\right\}(s,x-(t-s))ds,\\
\nonumber
	h^{(n)}(t,x)&=-\frac{1}{2}\int_t^{\infty}(v_1^{(n)}+v_2^{(n)})(s,x)ds.
\end{align}
The following proposition
shows that
if the coefficient of damping term $\delta$
is sufficiently small,
then
$\{ V^{(n)} \}_{n=0}^{\infty}$
is a Cauchy sequence.

\begin{proposition}\label{Prop21}
Let
$k>1$
and
$\theta\in (0,1)$.
Then there exists
$\delta_0>0$
such that for any
$\delta\in (0, \delta_0]$
the following holds:
if
$a$
satisfies \eqref{asa1},
then
$\{ V^{(n)} \}_{n=0}^{\infty}\in K_{\theta}$
for all
$n$
and
$\{ V^{(n)} \}_{n=0}^{\infty}$
is a Cauchy sequence with respect to the norm $\| \cdot \|_Y$.
\end{proposition}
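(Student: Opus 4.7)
The plan is a standard Banach fixed-point argument: I would show (i) that $K_\theta$ is invariant under the map $V^{(n-1)} \mapsto V^{(n)}$ defined by (2.5), and (ii) that this map is a contraction in the $\|\cdot\|_Y$ norm once $\delta$ is small. Since $Y$ is complete and $K_\theta$ is a closed subset, these two properties force $\{V^{(n)}\}$ to remain in $K_\theta$ and to be Cauchy in $Y$.

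For the invariance step, assume $V^{(n-1)} \in K_\theta$ with the natural weights suggested by $Y$, namely $|v_i^{(n-1)}(s,\cdot)|\le \theta(1+s)^{-k}$ and $|h^{(n-1)}(s,\cdot)|\le \theta(1+s)^{-(k-1)}$. Inserting these together with $|a(s,\cdot)|\le \delta(1+s)^{-k}$ and $|a_t(s,\cdot)|\le \delta(1+s)^{-k-1}$ into the first line of (2.5) gives, for $L^\infty_x$,
\[
    \int_t^\infty \tfrac{|a|}{2}\bigl(|v_1^{(n-1)}|+|v_2^{(n-1)}|\bigr)\,ds
    \;\lesssim\; \delta\theta \int_t^\infty (1+s)^{-2k}\,ds
    \;\lesssim\; \delta\theta\,(1+t)^{-(2k-1)},
\]
\[
    \int_t^\infty |a_t|\bigl(1+|h^{(n-1)}|\bigr)\,ds
    \;\lesssim\; \delta \int_t^\infty (1+s)^{-k-1}\,ds
    \;\lesssim\; \delta\,(1+t)^{-k}.
\]
Since $k>1$, both terms are dominated by $C\delta(1+t)^{-k}$, and the same estimate holds for $v_2^{(n)}$. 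Substituting these into the defining integral for $h^{(n)}$ and integrating once more in $s$ yields $|h^{(n)}(t,\cdot)|\lesssim \delta(1+t)^{-(k-1)}$. The $\partial_x$-bounds are obtained by differentiating (2.5) under the integral sign and using the hypothesis (1.2) for $\partial_x a$ and $\partial_x\partial_t a$, which enjoy the same time-decay as $a$ and $a_t$. Choosing $\delta_0$ so small that the implicit constants times $\delta_0$ are $\le \theta$ keeps $V^{(n)} \in K_\theta$.

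For the contraction step, set $D^{(n)}=V^{(n)}-V^{(n-1)}$ with components $(d_1^{(n)},d_2^{(n)},d_h^{(n)})$. Subtracting the formulas (2.5) at indices $n$ and $n-1$, the inhomogeneous $a_t$-term cancels and the remainder is \emph{linear} in $D^{(n-1)}$: schematically,
\[
    d_i^{(n)}(t,x) = -\int_t^\infty \Bigl\{\tfrac{a}{2}\bigl(d_1^{(n-1)}+d_2^{(n-1)}\bigr) + a_t\, d_h^{(n-1)}\Bigr\}(s, x \pm (t-s))\,ds
\]
for $i=1,2$, and analogously for $d_h^{(n)}$. Re-running the weighted $L^\infty$ estimates from Step~1, now on $D^{(n-1)}$ in place of $V^{(n-1)}$ and tracking the derivatives in $x$ to recover the $\mathcal{B}^1$-part of the norm, gives $\|D^{(n)}\|_Y \le C\delta\,\|D^{(n-1)}\|_Y$. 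Picking $\delta_0$ with $C\delta_0<1$ then makes the iteration a strict contraction, so $\{V^{(n)}\}$ is Cauchy in $Y$.

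The analysis itself is routine; the only delicate point is the accounting of powers of $(1+t)$. The integrals $\int_t^\infty (1+s)^{-k-1}\,ds$ and $\int_t^\infty (1+s)^{-k}\,ds$ produce precisely the weights $(1+t)^{-k}$ and $(1+t)^{-(k-1)}$ appearing in $\|\cdot\|_Y$, and this matching is exactly where the hypothesis $k>1$ is used (without it, the integrals in $s$ diverge or produce inadmissible weights). The smallness of $\delta$ then plays a double role: it keeps each component under the threshold $\theta$ in Step~1 and provides the contraction factor in Step~2.
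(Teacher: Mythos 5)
Your proposal is correct and takes essentially the same approach as the paper: the same weighted $L^\infty$ estimates on the integral recursion (using $k>1$ to produce exactly the weights $(1+t)^{-k}$ for $v_1,v_2$ and $(1+t)^{-(k-1)}$ for $h$, and smallness of $\delta$ to stay under the threshold $\theta$) give invariance of $K_\theta$, and the same linear estimates on differences give the contraction factor $C\delta$ in $\|\cdot\|_Y$. The only cosmetic difference is that you frame it as a Banach fixed-point argument on the closed set $K_\theta$, whereas the paper verifies membership in $K_\theta$ and the Cauchy property directly for the iterates.
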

\begin{proof}%
Frist, we prove that
if
$V^{(n-1)}\in K_{\theta}$,
then
$V^{(n)}\in K_{\theta}$.
Assume
$V^{(n-1)}\in K_{\theta}$.
It is obvious that
$V^{(n)}\in \left(C([0,\infty)\times\mathbf{R})\right)^3$.
We have
\begin{align*}
	|v_1^{(n)}(t,x)|&\le
		\int_t^{\infty}
		\left\{\frac{|a|}{2}(|v_1^{(n-1)}+|v_2^{(n-1)}|)
		+|a_t|(1+|h|)\right\}(s,x+t-s)ds\\
	&\le \int_t^{\infty}\frac{\theta\delta}{(1+s)^{2k}}
		+\frac{\delta}{(1+s)^{k+1}}(1+\theta (1+s)^{-(k-1)})ds\\
	&=\delta C_1(1+t)^{-k}
\end{align*}
with some
$C_1>0$.
Hereafter,
$C_j\ (j=1,2,\ldots)$
denotes a constant depending only on
$k, \theta$.
Moreover, differentiating under the integral sign,
we obtain
\begin{align*}
	\partial_xv_1^{(n)}(t,x)=
		-\int_t^{\infty}
		&\left\{ \frac{a_x}{2}(v_1^{(n-1)}+v_2^{(n-1)})
		+\frac{a}{2}(\partial_xv_1^{(n-1)}+\partial_xv_2^{(n-1)})\right.\\
	&\left.+a_{tx}(1+h^{(n-1)})+a_th_x^{(n-1)}\right\}(s,x+t-s)ds.
\end{align*}
This implies
$\partial_xv_1^{(n)}\in C([0,\infty)\times\mathbf{R})$
and
\begin{align*}
	|\partial_xv_1^{(n)}(t,x)|
		&\le \delta C_2(1+t)^{-k}.
\end{align*}
We can also obtain the same estimates for $v_2^{(n)}$.
By differentiating under the integral sign again, we have
$$
	\partial_xh^{(n)}(t,x)
	=-\frac{1}{2}\int_t^{\infty}
	(\partial_xv_1^{(n)}+\partial_xv_2^{(n)})(s,x)ds.
$$
Thus, we have
\begin{align*}
	|h^{(n)}(t,x)|&\le \delta C_1\int_t^{\infty}\frac{ds}{(1+s)^k}=\delta C_3(1+t)^{-(k-1)},\\
	|\partial_xh^{(n)}(t,x)|&\le \delta C_2\int_t^{\infty}\frac{ds}{(1+t)^k}=\delta C_4(1+t)^{-(k-1)}.
\end{align*}
The above estimates show
$V^{(n)}\in Y$.
Moreover,
taking
$\delta_0$
so small that
$\delta_0\max\{ C_1, C_3 \}\le \theta$,
we have
$V^{(n)}\in K_{\theta}$
for all
$\delta\in (0,\delta_0]$.

Next, we prove that
$\{ V^{(n)} \}_{n=0}^{\infty}$
is a Cauchy sequence with respect to the norm
$\|\cdot\|_Y$.
It follows that
\begin{align*}
	|v_1^{(n)}(t,x)-v_1^{(n-1)}(t,x)|
	&\le \int_t^{\infty}
		\left\{\frac{\delta}{2(1+s)^k}(|v_1^{(n-1)}-v_1^{(n-2)}|+|v_2^{(n-1)}-v_2^{(n-2)}|)\right.\\
		&\quad\left.+\frac{\delta}{(1+s)^{k+1}}|h^{(n-1)}-h^{(n-2)}|\right\}(s,x+t-s)ds\\
	&\le \delta C_5(1+t)^{-(2k-1)}\|V^{(n-1)}-V^{(n-2)}\|_Y.
\end{align*}
In the same way, we have
\begin{align*}
	|\partial_xv_1^{(n)}(t,x)-\partial_xv_1^{(n-1)}(t,x)|
	&\le \delta C_6(1+t)^{-(2k-1)}\|V^{(n-1)}-V^{(n-2)}\|_Y.
\end{align*}
and the same estimates holds for $v_2^{(n)}-v_2^{(n-1)}$.
We also obtain
\begin{align*}
	|h^{(n)}(t,x)-h^{(n-1)}(t,x)|
		&\le \delta C_7(1+t)^{-2(k-1)}\|V^{(n-1)}-V^{(n-2)}\|_Y,\\
	|\partial_xh^{(n)}(t,x)-\partial_xh^{(n-1)}(t,x)|
		&\le \delta C_8(1+t)^{-2(k-1)}\|V^{(n-1)}-V^{(n-2)}\|_Y.
\end{align*}
Consequently, taking
$\delta_0$
smaller so that
$r=\delta_0 (2C_5+2C_6+C_7+C_8)<1$,
we have
$$
	\|V^{(n)}-V^{(n-1)}\|_Y\le r\|V^{(n-1)}-V^{(n-2)}\|_Y,
$$
which shows that
$\{ V^{(n)} \}_{n=0}^{\infty}$
is a Cauchy sequence.
\end{proof}

\begin{proof}[Proof of Lemma \ref{lem1}]

By the above proposition,
$\{ V^{(n)} \}_{n=0}^{\infty}$
is a Cauchy sequence and converges to some element
$(v_1, v_2, h)\in K_{\theta}$.
Therefore,
$(v_1, v_2, h)$
satisfies the integral equation \eqref{eq23}, \eqref{eq24}.
By noting the differentiability with respect to
$t$
of the right-hand-side of \eqref{eq23} \eqref{eq24},
we have
$v_1, v_2\in C^1([0,\infty)\times\mathbf{R})$.
Differentiating \eqref{eq23} and \eqref{eq24}, one can see that
$v_1, v_2$ satisfies the differential equation \eqref{eq21}, \eqref{eq22}, respectively.
By the equation of
$h$,
we also have
\begin{equation}
\label{eq26}
	\partial_th(t,x)=\frac{1}{2}(v_1(t,x)+v_2(t,x)),\quad
	\partial_xh(t,x)=\frac{1}{2}(v_1(t,x)-v_2(t,x)).
\end{equation}
Thus, $h\in C^2([0,\infty)\times \mathbf{R})$
and $h$ is a classical solution of \eqref{eqh}.
By $(v_1, v_2, h)\in K_{\theta}$ and \eqref{eq26}, the estimate \eqref{esh} is obvious.
\end{proof}

\section{Proof of Theorem \ref{th1}}
In this section, we give a proof of Theorem \ref{th1}.
By Lemma \ref{lem1}, there exists
$h$
satisfying \eqref{eqh}.
Thus, \eqref{eqg} holds for
$g$
given by \eqref{eqgh}
and we can transform the equation \eqref{eq11} into divergence form
\begin{equation}
\label{eq31}
	(gu)_{tt}-(gu)_{xx}+2(g_xu)_x+((-2g_t+ga)u)_t=g|u|^p.
\end{equation}
We apply a text function method to \eqref{eq31}.
Since
$g$
is defined by \eqref{eqgh}
and
$h$ 
satisfies \eqref{esh}, we have
\begin{equation}
\label{eq32}
	C^{-1}\le g(t,x)\le C,\quad
	|g_t(t,x)|\le \frac{C}{(1+t)^k},\quad
	|g_x(t,x)|\le \frac{C}{(1+t)^k}
\end{equation}
with some constant $C>0$.
We define test functions
\begin{align*}
	\phi(x)&=\left\{\begin{array}{ll}
		1&(|x|\le 1/2\\
		\displaystyle\frac{\exp(-1/(1-x^2))}{\exp(-1/(x^2-1/4))+\exp(-1/(1-x^2))}&(1/2<|x|<1),\\
		0&(|x|\ge 1),
	\end{array}\right.\\
	\eta(t)&=\left\{\begin{array}{ll}
		1&(0\le t\le 1/2),\\
		\displaystyle\frac{\exp(-1/(1-t^2))}{\exp(-1/(t^2-1/4))+\exp(-1/(1-t^2))}&(1/2<t<1),\\
		0&(t\ge 1).
	\end{array}\right.
\end{align*}
It is obvious that
$\phi\in C_0^{\infty}(\mathbf{R}), \eta\in C_0^{\infty}([0,\infty))$.
We also see that
\begin{align}
\label{eq33}
	|\phi^{\prime}(x)|&\lesssim \phi(x)^{1/p},\quad
	|\phi^{\prime\prime}(x)|\lesssim \phi(x)^{1/p},\\
	|\eta^{\prime}(t)|&\lesssim \eta(t)^{1/p},\quad
	|\eta^{\prime\prime}(t)|\lesssim \eta(t)^{1/p}.\notag
\end{align}
Indeed, let
$q,r$
satisfy
$1/p+1/q=1, 1/p+2/r=1$
and let
$\mu =\phi^{1/q}, \nu=\phi^{1/r}$.
Then we have
$$
	|\phi^{\prime}|=|(\mu^q)^{\prime}|
	=|q\mu^{q-1}\mu^{\prime}|\lesssim \mu^{q-1}=\phi^{1/p}
$$
and
$$
	|\phi^{\prime\prime}|=|(\nu^r)^{\prime}|
	\lesssim |\nu^{\prime\prime}|\nu^{r-1}+|\nu^{\prime}|^2\nu^{r-2}
	\lesssim \nu^{r-2}=\phi^{1/p}.
$$
To prove Theorem \ref{th1},
we use a contradiction argument.
Suppose
$u\in X(\infty)$
is a global solution to \eqref{eq11}
with initial data
$(u_0, u_1)$
satisfying \eqref{eqini}.
Let $\tau, R$ be parameters such that
$\tau\in (\tau_0, \infty), R\in (R_0, \infty)$,
where
$\tau_0\ge 1, R_0>0$
are defied later.
We put
\begin{align*}
	\eta_{\tau}(t)=\eta(t/\tau),\quad \phi_R(x)=\phi(x/R),\\
	\psi_{\tau, R}(t,x)=\eta_{\tau}(t)\phi_R(x)
\end{align*}
and
\begin{align*}
	I_{\tau,R}&:=\int_0^{\tau}\int_{-R}^Rg|u|^p\psi_{\tau,R}dxdt,\\
	J_R&:=\int_{-R}^R
		\left( (-g_t(0,x)+g(0,x)a(0,x))u_0(x)+g(0,x)u_1(x)\right)\phi_R(x)dx,
\end{align*}
Substituting the test function
$g(t,x)\psi_{\tau, R}(t,x)$
into the definition of solution \eqref{sol},
we see that
\begin{align*}
	I_{\tau,R}+J_R&=
	\int_0^{\tau}\int_{-R}^R\left( gu\partial_t^2\psi_{\tau,R}-gu\partial_x^2\psi_{\tau,R}
		-2(g_xu)\partial_x\psi_{\tau,R}\right.\\
	&\left.-(-2g_t+ga)u\partial_t\psi_{\tau,R}\right)dxdt
	=:K_1+K_2+K_3+K_4.
\end{align*}

Next, we estimate the terms $K_1,\ldots, K_4$.
Let
$q$
be the dual of
$p$,
that is
$q=p/(p-1)$.
By using the H\"older inequality and \eqref{eq32}, \eqref{eq33}, it follows that
\begin{align*}
	K_1
	&\le \tau^{-2}\int_0^{\tau}\int_{-R}^R|gu||\eta^{\prime\prime}(t/\tau)|\phi_R(x)dxdt\\
	&\lesssim \tau^{-2}I_{\tau,R}^{1/p}
		\left(\int_{-R}^R\left(\int_{0}^{\tau}g(t,x)dt\right)\phi_R(x)dx\right)^{1/q}\\
	&\lesssim \tau^{-2+1/q}R^{1/q}I_{\tau,R}^{1/p},
\end{align*}
\begin{align*}
	K_2&\le
	R^{-2}\int_0^{\tau}\int_{-R}^R|gu||\phi^{\prime\prime}(x/R)|\eta_{\tau}(t)dxdt\\
	&\lesssim R^{-2}I_{\tau,R}^{1/p}
		\left(\int_{-R}^R\left(\int_0^{\tau}g(t,x)\eta_{\tau}(t)dt\right)dx\right)^{1/q}\\
	&\lesssim \tau^{1/q}R^{-2+1/q}I_{\tau,R}^{1/p},
\end{align*}
\begin{align*}
	K_3&\le R^{-1}\int_0^{\tau}\int_{-R}^R|g_xu||\phi^{\prime}(x/R)|\eta_{\tau}dxdt\\
	&\lesssim R^{-1}I_{\tau,R}^{1/p}
		\left(\int_{-R}^R\left(\int_0^{\tau}(1+t)^{-qk}dt\right)dx\right)^{1/q}\\
	&\lesssim R^{-1+1/q}I_{\tau,R}^{1/p}.
\end{align*}
Finally, we estimate $K_4$.
Noting that
${\rm supp}\, \eta^{\prime}(t)\subset [1/2, 1]$,
we have
\begin{align*}
	K_4&\le
		\tau^{-1}\int_0^{\tau}\int_{-R}^R(2|g_t|+|ga|)|u||\eta^{\prime}(t/\tau)|\phi_Rdxdt\\
	&\lesssim \tau^{-1}I_{\tau,R}^{1/p}
		\left(\int_{-R}^R\left(\int_{\tau/2}^{\tau}(1+t)^{-kq}dt\right)\phi_R(x)dx\right)^{1/q}\\
	&\lesssim \tau^{-1-k+1/q}R^{1/q}I_{\tau,R}^{1/p}.
\end{align*}

Therefore, putting
$$
	D(\tau,R):=\tau^{-2+1/q}R^{1/q}+\tau^{1/q}R^{-2+1/q}+R^{-1+1/q},
$$
we obtain
\begin{equation}
\label{eq34}
	I_{\tau, R}+J_R\le CD(\tau,R)I_{\tau,R}^{1/p}.
\end{equation}
By the assumption on the data \eqref{eqini},
there exists $R_0>0$ such that
$J_R>0$
holds for
$R\ge R_0$.
This implies
$$
	I_{\tau,R}\le CD(\tau,R)^q.
$$
Putting
$\tau_0=R_0$
and
$R=\tau$,
we have
\begin{equation}
\label{eqIt2}
	I_{\tau,\tau}\le C\tau^{-1+1/q}
\end{equation}
for $\tau\ge \tau_0$.
In particular,
$I_{\tau,\tau}\le C$
with some $C>0$
and hence,
$g|u|^p\in L^1([0,\infty)\times\mathbf{R})$
and
$\lim_{\tau\to +\infty}I_{\tau,\tau}=\|g|u|^p\|_{L^1([0,\infty)\times\mathbf{R})}$.
Moreover, since
$-1+1/q<0$,
by letting $\tau\rightarrow +\infty$,
the right-hand-side of \eqref{eqIt2} tends to
$0$.
This gives 
$\|g|u|^p\|_{L^1([0,\infty)\times\mathbf{R})}=0$,
that is
$u\equiv 0$.
However,
in view of \eqref{sol},
it contradicts $(u_0, u_1)\neq 0$.
This completes the proof.

\section{Proof of Theorem \ref{th2}}
In this section, we give a proof of Theorem \ref{th2}.
Note that we can prove Lemma \ref{lem2}
by the same argument as the proof of Lemma \ref{lem1}.
The only difference between the proofs of Lemmas \ref{lem1} and \ref{lem2}
is that of the coefficients of
\eqref{eqh} and \eqref{eqh2}.
However, by the assumption on $b(t,x)$,
it follows that
$$
	|b(t,x)|\le \frac{\delta}{(1+t)^k},\quad
	\left| \frac{b(t,x)}{1+t}+b_t(t,x)\right| \le \frac{2\delta}{(1+t)^{k+1}}.
$$
Using this estimate, we can prove Lemma \ref{lem2}
in the same way as Section 2
and hence, we omit the detailed proof.

Now we prove Theorem \ref{th2}.
By \eqref{esh2} and \eqref{eqg2}, we have
\begin{equation}
\label{esg2}
	g\sim (1+t),\quad |g_t|\lesssim 1,\quad
	|g_x|\lesssim (1+t)^{-k+1}.
\end{equation}
We use the same notation as in Section 3
and suppose that $u$ is a global solution.
The main difference with the previous section
lies in the estimate of the terms $K_1, \ldots, K_4$.
In this case, we have
\begin{align*}
	K_1
	&\lesssim \tau^{-2+2/q}R^{1/q}I_{\tau,R}^{1/p},\\
	K_2
	&\lesssim \tau^{2/q}R^{-2+1/q}I_{\tau,R}^{1/p},\\
	K_3
	&\lesssim F(\tau)R^{-1+1/q}I_{\tau,R}^{1/p},\\
	K_4
	&\lesssim \tau^{-1-1/p+1/q}R^{1/q}I_{\tau,R}^{1/p}.
\end{align*}
where
\begin{equation}
\label{eqF}
	F(\tau)=\left\{\begin{array}{ll}
		1&(-q(1/p+k-1)<-1),\\
		(\log\tau)^{1/q}&(-q(1/p+k-1)=-1),\\
		\tau^{-(1/p+k-1)+1/q}&(-q(1/p+k-1)>-1)
	\end{array}\right.
\end{equation}
and we have used that
${\rm supp}\, \eta^{\prime}(t)\subset [1/2, 1]$
and \eqref{esg2} for the estimate of
$K_4$.
Let
\begin{equation}
\label{eqD2}
	D(\tau,R):=\tau^{-2+2/q}R^{1/q}+\tau^{2/q}R^{-2+1/q}
	+F(\tau)R^{-1+1/q}.
\end{equation}
We note that the powers of each terms of
$D(\tau, \tau)$
are negative if
$1<p< 3$.
Thus, by putting $R=\tau$
and the same argument as the previous section,
we can lead a contradiction.

When $p=3$,
we need a certain modification of the above argument.
We put
$$
	I_{\tau,R}^{\prime}=\int_{\tau/2}^{\tau}\int_{-R}^Rg|u|^p\psi_{\tau,R}dxdt,\quad
	I_{\tau,R}^{\prime\prime}=\int_0^{\tau}\int_{R/2<|x|<R}g|u|^p\psi_{\tau,R}dxdt.
$$
Then we can improve the estimates of
$K_1,\ldots,K_4$
as
\begin{align*}
	K_1&\le \tau^{-2+2/q}R^{1/q}(I_{\tau,R}^{\prime})^{1/p},\\
	K_2&\le \tau^{2/q}R^{-2+1/q}(I_{\tau,R}^{\prime\prime})^{1/p},\\
	K_3&\le F(\tau)R^{-1+1/q}(I_{\tau,R}^{\prime\prime})^{1/p},\\
	K_4&\le \tau^{-1-1/p+1/q}R^{1/q}(I_{\tau,R}^{\prime})^{1/p}.
\end{align*}
Thus, we have
\begin{align*}
	I_{\tau,R}\le C\left(\tau^{-2+2/q}R^{1/q}(I_{\tau,R}^{\prime})^{1/p}
		+(\tau^{2/q}R^{-2+1/q}+F(\tau)R^{-1+1/q})(I_{\tau,R}^{\prime\prime})^{1/p}
	\right).
\end{align*}
Substituting
$p=3$
and
$R=\tau$,
we obtain
\begin{equation}
\label{esI}
	I_{\tau,\tau}\le C((I_{\tau,\tau}^{\prime})^{1/3}+(I_{\tau,\tau}^{\prime\prime})^{1/3}).
\end{equation}
In particular, we see that
$I_{\tau, \tau}\le C$
with some constant
$C>0$,
since
$I_{\tau,\tau}^{\prime}\le I_{\tau,\tau}$
and
$I_{\tau,\tau}^{\prime\prime}\le I_{\tau,\tau}$.
Hence
$g|u|^3\in L^1([0,\infty)\times\mathbf{R})$
and
$\lim_{\tau\to\infty}I_{\tau,\tau}=\|g|u|^3\|_{L^1([0,\infty)\times\mathbf{R})}$.
However,
by noting the integral region of
$I_{\tau,\tau}^{\prime}, I_{\tau,\tau}^{\prime\prime}$,
we can see that the integrability of $g|u|^3$ shows
$$
	\lim_{\tau\to\infty}I_{\tau,\tau}^{\prime}=0,\quad
	\lim_{\tau\to\infty}I_{\tau,\tau}^{\prime\prime}=0.
$$
Therefore, turning back to \eqref{esI},
we obtain
$\lim_{\tau\to\infty}I_{\tau,\tau}=0$.
This implies
$u\equiv 0$.
In view of \eqref{sol},
this contradicts
$(u_0, u_1)\neq 0$.
This completes the proof.

\section{Proof of Theorem \ref{th3}}
In this section, we give a proof of Theorem \ref{th3}.
In order to prove Lemma \ref{lem3},
we modify the argument in Section 2.
Following the argument in Section 2,
we look for an appropriate solution by the following iteration:
\begin{align}
\label{eq51}
	v_1^{(n)}(t,x)&=
		\int_t^{\infty}\left\{\frac{1}{2}\left(\frac{\mu}{1+s}-b\right)
		(v_1^{(n-1)}+v_2^{(n-1)})\right.\\
\nonumber
		&\quad\left.-\left(\frac{\mu}{1+s}b+b_t\right)(1+h^{(n-1)})\right\}(s,x+t-s))ds,\\
\nonumber
	v_2^{(n)}(t,x)&=
		\int_t^{\infty}\left\{\frac{1}{2}\left(\frac{\mu}{1+s}-b\right)
		(v_1^{(n-1)}+v_2^{(n-1)})\right.\\
\nonumber
		&\quad\left.-\left(\frac{\mu}{1+s}b+b_t\right)(1+h^{(n-1)})\right\}(s,x-(t-s))ds,\\
\nonumber
	h^{(n)}(t,x)&=-\frac{1}{2}\int_t^{\infty}(v_1^{(n)}+v_2^{(n)})(s,x)ds.
\end{align}
We modify the definition of function space $Y$ in Section 2 as follows.
We say
$V=(v_1,v_2,h)\in Y$
if
$V\in \left(C([0,\infty)\times\mathbf{R})\right)^3$,
$V$
is differentiable with respect to
$x$
for all
$(t,x)\in [0,\infty)\times\mathbf{R}$,
$\partial_xV\in \left(C([0,\infty)\times\mathbf{R})\right)^3$,
and
$\|V\|_Y=\|(v_1,v_2,h)\|_Y<+\infty$,
where
\begin{align*}
	\|(v_1,v_2,h)\|_Y&=\sup_{t\in [0,\infty)}\left\{
		\lambda(1+t)^k\|v_1(t)\|_{\mathcal{B}^1}
		+\lambda(1+t)^k\|v_2(t)\|_{\mathcal{B}^1}
		+(1+t)^{k-1}\|h(t)\|_{\mathcal{B}^1}\right\},
\end{align*}
where
$\lambda$
is a large parameter fixed later.
We put
\begin{align}
\label{Ktheta}
	K_{\theta}:=&\{ (v_1,v_2,h)\in Y \mid
	\sup_{t\in [0,\infty)}(1+t)^k\|v_1(t)\|_{\infty}\le\theta^{\prime},\\
\nonumber
	&\sup_{t\in [0,\infty)}(1+t)^k\|v_2(t)\|_{\infty}\le\theta^{\prime},\quad
	\sup_{t\in [0,\infty)}(1+t)^{k-1}\|h(t)\|_{\infty}\le\theta
	\},
\end{align}
where
$\theta^{\prime}:=\theta\min\{ k-1, 1 \}$.
We take
$(v_1^{(0)}, v_2^{(0)}, h^{(0)} )\in K_{\theta}$
arbitrarily and define
$V^{(n)}=(v_1^{(n)}, v_2^{(n)}, h^{(n)} )$
inductively by \eqref{eq51}.
Now we prove that
$\{ V^{(n)} \}_{n=0}^{\infty}$
is a Cauchy sequence in
$K_{\theta}$
for sufficiently large
$\lambda$
and small
$\delta$.
\begin{proposition}\label{prop51}
If
$k>\max\{ 1, \mu \}$,
then there exist $\lambda$ and $\delta_0$
having the following property:
if $\delta\in (0,\delta_0]$,
then
$\{ V^{(n)} \}_{n=0}^{\infty}$
is a Cauchy sequence in
$K_{\theta}$
with respect to the norm
$\|\cdot\|_Y$.
\end{proposition}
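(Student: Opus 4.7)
The plan is to follow the template of Proposition 2.1, with modifications dictated by the new iteration (5.1): the coefficient $\mu/(1+s)$ in front of the $v_j$-terms is \emph{not} small in $\delta$, so a pure smallness argument no longer closes. Instead, I would exploit the hypothesis $k>\mu$, which produces the integration gain $\int_t^{\infty}\mu/(1+s)^{k+1}\,ds=\mu/[k(1+t)^k]$ with $\mu/k<1$; the parameter $\lambda$ in the norm is introduced to damp the $h$-to-$v$ feedback. The invariance step $V^{(n-1)}\in K_\theta\Rightarrow V^{(n)}\in K_\theta$ follows by direct estimation of (5.1): using the pointwise bounds $|\mu/(1+s)-b|\le (\mu+\delta)/(1+s)$ and $|\mu b/(1+s)+b_t|\le \delta(\mu+1)/(1+s)^{k+1}$, one obtains $|v_1^{(n)}(t,x)|\le [(\mu+\delta)\theta'+\delta(\mu+1)(1+\theta)]/[k(1+t)^k]$, which is $\le \theta'/(1+t)^k$ for small $\delta$ since $k>\mu$. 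Integrating then gives $|h^{(n)}|\le \theta'/[(k-1)(1+t)^{k-1}]$, and the normalization $\theta'=\theta\min\{k-1,1\}$ is tuned exactly so that $\theta'/(k-1)\le \theta$ in both the $k\ge 2$ and $k<2$ cases. Derivative bounds follow by differentiating under the integral and using \eqref{asa1}.

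For the Cauchy property, set $\sigma_j^{(n)}=v_j^{(n)}-v_j^{(n-1)}$, $\tau^{(n)}=h^{(n)}-h^{(n-1)}$ and $W^{(n)}=V^{(n)}-V^{(n-1)}$. The key observation is that, because $\lambda(1+s)^k\|\sigma_1\|_{\mathcal{B}^1}$ and $\lambda(1+s)^k\|\sigma_2\|_{\mathcal{B}^1}$ both appear in the sup defining $\|\cdot\|_Y$, the pointwise bound $|\sigma_1^{(n-1)}(s,x)|+|\sigma_2^{(n-1)}(s,x)|\le \|W^{(n-1)}\|_Y/[\lambda(1+s)^k]$ holds \emph{without} any factor $2$. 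Estimating (5.1) as in Proposition 2.1 then yields
\[
\lambda(1+t)^k|\sigma_j^{(n)}|\le \|W^{(n-1)}\|_Y\Bigl[\tfrac{\mu+\delta}{2k}+\tfrac{\lambda\delta(\mu+1)}{2k-1}\Bigr],\quad (1+t)^{k-1}|\tau^{(n)}|\le \|W^{(n-1)}\|_Y\Bigl[\tfrac{\mu+\delta}{2\lambda k(k-1)}+\tfrac{\delta(\mu+1)}{(2k-1)(2k-2)}\Bigr].
\]
Summing over $j=1,2$ and the $h$-component produces a total contraction factor
\[
r=\tfrac{\mu+\delta}{k}+\tfrac{2\lambda\delta(\mu+1)}{2k-1}+\tfrac{\mu+\delta}{2\lambda k(k-1)}+\tfrac{\delta(\mu+1)}{(2k-1)(2k-2)}.
\]
Since $\mu/k<1$, fix $\eta\in (0,1-\mu/k)$; first choose $\lambda$ large enough (using $k>1$) that $(\mu+\delta)/[2\lambda k(k-1)]<\eta/3$, then choose $\delta_0$ small enough (depending on $\lambda$) that the $\delta$- and $\lambda\delta$-terms together are $<\eta/3$. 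This yields $r<1$ for all $\delta\in(0,\delta_0]$, and the analogous derivative estimates close the argument in $\|\cdot\|_Y$.

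The main obstacle is precisely the non-smallness of $\mu/(1+s)$ in $\delta$: the contraction must come entirely from the integration gain $\mu/k<1$, so the hypothesis $k>\mu$ is indispensable. The delicate point is that the perturbative term $\bigl(\mu b/(1+s)+b_t\bigr)\tau^{(n-1)}$, after being $\lambda$-weighted, generates the factor $2\lambda\delta(\mu+1)/(2k-1)$ which \emph{grows} with $\lambda$; this forces the constants to be chosen in the order ``$\lambda$ first, then $\delta_0$'', which is exactly how the proposition is stated. Convergence of $\{V^{(n)}\}$ in $K_\theta$, its identification with a classical solution of \eqref{eqh3}, and the derivative estimates, all then proceed verbatim as in Section 2.
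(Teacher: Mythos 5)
Your proposal is correct and follows essentially the same route as the paper's proof: invariance of $K_{\theta}$ via the integration gain $\int_t^{\infty}\mu(1+s)^{-k-1}ds=\mu/[k(1+t)^k]$ with $\mu/k<1$ (which is where $k>\mu$ enters), followed by a contraction estimate in the $\lambda$-weighted norm where the $h$-feedback term carries the factor $\lambda\delta$, forcing the choice of $\lambda$ first and then $\delta_0$ — exactly the paper's contraction factor $\bigl(1+\tfrac{1}{2\lambda(k-1)}\bigr)\bigl(\tfrac{\mu}{k}+\delta\lambda C\bigr)$, which your explicit $r$ reproduces. Your tracking of explicit constants (including the absence of a factor $2$ in the $v$-difference bound, since the norm sums the two weighted components) is a minor sharpening of the paper's generic-constant presentation, not a different argument.
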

\begin{proof}
We first show that if
$V^{(n-1)}\in K_{\theta}$,
then
$V^{(n)}\in K_{\theta}$.
We calculate
\begin{align*}
	|v_1^{(n)}(t,x)|\le \left(\frac{\mu}{k}\theta^{\prime}
		+\delta C\right)(1+t)^{-k}.
\end{align*}
In view of
$k>\mu$,
by taking
$\delta$
sufficiently small,
we obtain
$$
	(1+t)^k|v_1^{(n)}(t,x)|\le \theta^{\prime}.
$$
By the same way, we also have
$
	(1+t)^k|v_2^{(n)}(t,x)|\le \theta^{\prime}.
$
Noting that
$\theta^{\prime}/(k-1)\le \theta$,
we obtain
$$
	|h^{(n)}(t,x)|\le \int_t^{\infty}\frac{\theta^{\prime}}{(1+s)^k}ds
	\le \theta(1+t)^{-(k-1)}.
$$
By differentiating under the integral sign
and noting that
$V^{(n-1)}\in Y$,
we have
$$
	(1+t)^k|\partial_xv_1^{(n)}(t,x)|\le C,\quad
	(1+t)^k|\partial_xv_2^{(n)}(t,x)|\le C,\quad
	(1+t)^{k-1}|\partial_xh^{(n)}(t,x)|\le C
$$
with some constant
$C>0$.
Therefore we have
$V^{(n)}\in K_{\theta}$.

Next, we prove that
$\{ V^{(n)} \}_{n=0}^{\infty}$
is a Cauchy sequence.
By a straightforward calculation,
we can estimate
\begin{align*}
	&\sum_{\alpha=0,1}\sum_{j=1,2}
		|\partial_x^{\alpha}v_j^{(n)}(t,x)-\partial_x^{\alpha}v_2^{(n)}(t,x)|\\
	&\quad\le
		\int_t^{\infty}\frac{\mu}{1+s}
		\sum_{\alpha=0,1}\sum_{j=1,2}
		\|\partial_x^{\alpha}v_j^{(n-1)}-\partial_x^{\alpha}v_j^{(n-2)}\|_{\infty}ds\\
	&\qquad+\delta C\int_t^{\infty}\frac{1}{(1+s)^k}
		\sum_{\alpha=0,1}\sum_{j=1,2}
		\|\partial_x^{\alpha}v_j^{(n-1)}-\partial_x^{\alpha}v_j^{(n-2)}\|_{\infty}ds\\
	&\qquad+\delta C\int_t^{\infty}\frac{1}{(1+s)^{k+1}}
		\sum_{\alpha=0,1}
		\|\partial_x^{\alpha}h^{(n-1)}-\partial_x^{\alpha}h^{(n-2)}\|_{\infty}ds.
\end{align*}
Since $k>1$, this implies
\begin{equation*}
	\lambda(1+t)^k\sum_{\alpha=0,1}\sum_{j=1,2}
		|\partial_x^{\alpha}v_j^{(n)}(t,x)-\partial_x^{\alpha}v_j^{(n-1)}(t,x)|
		\le\left(\frac{\mu}{k}+\delta \lambda C\right)\|V^{(n-1)}-V^{(n-2)}\|_Y.
\end{equation*}
Using this, we can estimate the difference of $h^{(n)}$ and $h^{(n-1)}$ as
\begin{equation*}
	(1+t)^{k-1}\sum_{\alpha=0,1}|\partial_x^{\alpha}h^{(n)}(t,x)-\partial_x^{\alpha}h^{(n-1)}(t,x)|
	\le \frac{1}{2\lambda(k-1)}\left(\frac{\mu}{k}+\delta \lambda C\right)
	\|V^{(n)}-V^{(n-1)}\|_Y.
\end{equation*}
Adding the two inequalities above, we obtain
$$
	\|V^{(n)}-V^{(n-1)}\|_Y\le \left(1+\frac{1}{2\lambda(k-1)}\right)
		\left(\frac{\mu}{k}+\delta \lambda C\right)\|V^{(n-1)}-V^{(n-2)}\|_Y.
$$
Thus, by taking $\lambda$ sufficiently large and then
$\delta$ sufficiently small,
we obtain
$$
	\|V^{(n)}-V^{(n-1)}\|_Y\le r\|V^{(n-1)}-V^{(n-2)}\|_Y
$$
with some
$0<r<1$.
This completes the proof.
\end{proof}

\begin{proof}[Proof of Theorem \ref{th3}]
First, we note that we can prove Lemma \ref{lem3}
by the same argument as the proof of Lemma \ref{lem1}.
Thus, we find a solution $g$ of \eqref{eqg} satisfying
\begin{equation}
\label{esg3}
	g\sim (1+t)^{\mu},\quad
	\|g_t(t)\|_{\infty}\lesssim (1+t)^{\mu-1},\quad
	\|g_x(t)\|_{\infty}\lesssim (1+t)^{\mu-k}.
\end{equation}
In what follows,
we use the same notation as in Section 3.
Suppose that
$u$
is a global solution.
Using the estimates \eqref{esg3},
we can obtain
\begin{align*}
	K_1&\lesssim \tau^{-2+(\mu+1)/q}R^{1/q}I_{\tau,R}^{1/p},\\
	K_2&\lesssim \tau^{(\mu+1)/q}R^{-2+1/q}I_{\tau,R}^{1/p},\\
	K_3&\lesssim G(\tau)R^{-1+1/q}I_{\tau,R}^{1/p},\\
	K_4&\lesssim \tau^{-2+(\mu+1)/q}R^{1/q}I_{\tau,R}^{1/p},
\end{align*}
where
$$
	G(\tau)=\left\{\begin{array}{ll}
		1&(\mu-kq<-1),\\
		(\log\tau)^{1/q}&(\mu-kq=-1),\\
		\tau^{-k+(\mu+1)/q}&(\mu-kq>-1).
	\end{array}\right.
$$
In this case we put
\begin{equation}
\label{eqD3}
	D(\tau,R):=\tau^{-2+(\mu+1)/q}R^{1/q}+
		\tau^{(\mu+1)/q}R^{-2+1/q}+G(\tau)R^{-1+1/q}.
\end{equation}
We note that the powers of each terms of
$D(\tau, \tau)$
do not exceed $0$ if
$1<p\le 1+2/\mu$.
Thus, by putting $R=\tau$
and the same argument as Sections 3 and 4,
we can lead a contradiction and complete the proof.
\end{proof}

\section{Estimates of Lifespan}
We can also give an upper estimate of the lifespan of the soluiton.
In this section, we follow the argument in \cite{IW} (see also \cite{Ku}).
We consider the initial value problem \eqref{eq11}
with the initial data
$(u,u_t)(0,x)=\varepsilon (u_0,u_1)(x)$
instead of
$(u_0,u_1)(x)$,
where
$\varepsilon$
denotes a positive small parameter.
For the sake of simplicity, we treat only the case that
$a$
satisfies \eqref{asa1}.
We define the lifespan of the solution by
$$
	T_{\varepsilon}:=\sup\{ T\in (0,\infty] \mid
	\text{ there is a unique solution}\  u\in X(T) \}.
$$
By Proposition \ref{prop1},
if
$(u_0,u_1)\in H^1(\mathbf{R})\times L^2(\mathbf{R})$,
then
$T_{\varepsilon}>0$
for any
$\varepsilon>0$.
We have an upper bound of
$T_{\varepsilon}$
as follows.
\begin{proposition}\label{prop61}
Let
$1<p<\infty$
and let
$(u_0,u_1)\in H^1(\mathbf{R})\times L^2(\mathbf{R})$
satisfy \eqref{eqini}
with
$g$
defined by \eqref{eqgh}
and
$h$
in Lemma \ref{lem1}.
Then
$T_{\varepsilon}$
is estimated as
\begin{equation}
\label{esTep}
	T_{\varepsilon}\le C\varepsilon^{-1/\kappa}
\end{equation}
with some constant
$C>0$
and
$\kappa=\frac{1}{p-1}(1+1/p)$.
\end{proposition}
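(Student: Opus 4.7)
The plan is to adapt the test function argument of Section 3 to the rescaled data $(\varepsilon u_0,\varepsilon u_1)$, tracking the $\varepsilon$-prefactor carefully and---crucially---choosing the spatial cutoff radius $R$ as a suitable power of the temporal cutoff $\tau$ rather than setting $R=\tau$. Assuming for contradiction $T_\varepsilon>\tau$ and inserting $g\psi_{\tau,R}$ into the definition of solution \eqref{sol}, the calculation producing \eqref{eq34} carries over verbatim, since only the data term on the right-hand side of \eqref{sol} acquires the $\varepsilon$-prefactor; one obtains
\begin{equation*}
    I_{\tau,R}+\varepsilon J_R \le CD(\tau,R)\,I_{\tau,R}^{1/p}.
\end{equation*}

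Because both $I_{\tau,R}$ and (for $R\ge R_0$) $\varepsilon J_R$ are non-negative, each is individually bounded by the right-hand side. From $I_{\tau,R}\le CD(\tau,R)\,I_{\tau,R}^{1/p}$ one would first deduce $I_{\tau,R}^{1/p}\le CD(\tau,R)^{q/p}$, and substituting into $\varepsilon J_R\le CD(\tau,R)\,I_{\tau,R}^{1/p}$ together with the identity $1+q/p=q$ would give
\begin{equation*}
    \varepsilon J_R\le CD(\tau,R)^q.
\end{equation*}
Hypothesis \eqref{eqini} provides a positive lower bound for $J_R$ when $R\ge R_0$, so $\varepsilon \le C'\,D(\tau,R)^q$ would hold for every $\tau\in(0,T_\varepsilon)$ and every sufficiently large $R$.

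The decisive step is to minimize $D(\tau,R)$ over $R$. Using $1/q=1-1/p$ and the ansatz $R=\tau^a$, the three terms of $D(\tau,R)$ scale as $\tau^{-1-1/p+a(1-1/p)}$, $\tau^{1-1/p-a(1+1/p)}$, and $\tau^{-a/p}$; the first is increasing in $a$ while the other two are decreasing. The balance $a=(p+1)/p$ equalizes the first and third exponents at the common value $-(p+1)/p^2$, and a short check shows the second exponent equals $-(3p+1)/p^2$, hence is strictly subdominant. Consequently $D(\tau,\tau^{(p+1)/p})\le C\tau^{-(p+1)/p^2}$, and raising to the $q$-th power produces
\begin{equation*}
    \varepsilon \le C\,\tau^{-q(p+1)/p^2}=C\tau^{-\kappa},\qquad \kappa=\frac{p+1}{p(p-1)}=\frac{1}{p-1}\left(1+\frac{1}{p}\right).
\end{equation*}
Solving for $\tau$ yields $\tau\le C\varepsilon^{-1/\kappa}$, and letting $\tau\to T_\varepsilon^-$ gives \eqref{esTep}.

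The main obstacle is conceptual rather than technical: the natural choice $R=\tau$ used in Section 3 only produces $T_\varepsilon\lesssim \varepsilon^{-(p-1)}$, which for large $p$ is far weaker than the claim. The key new ingredient is recognizing that the three terms of $D(\tau,R)$ scale differently in $R$ and that their optimal balance occurs at $R=\tau^{(p+1)/p}>\tau$; the remaining exponent arithmetic, including the identity $q(p+1)/p^2=\kappa$, is routine.
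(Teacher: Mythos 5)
Your proof is correct and takes essentially the same approach as the paper: both reduce \eqref{eq34} (with the data term scaled by $\varepsilon$) to $\varepsilon\lesssim D(\tau,R)^q$ and then optimize the ansatz $R=\tau^{\alpha}$, with the paper's \eqref{eq37} and the choice $\alpha=1+1/p$ matching your balance of the first and third exponents and your verification that the second is subdominant. The only cosmetic difference is the absorption step: the paper bounds $J_R\lesssim D(\tau,R)^q$ by maximizing $f(c)=dc^b-c$, whereas you drop the nonnegative term $I_{\tau,R}$, deduce $I_{\tau,R}^{1/p}\le CD(\tau,R)^{q/p}$, and substitute back using $1+q/p=q$ --- an interchangeable elementary device; your handling of the trivial regime (small $T_{\varepsilon}$, the constraint $R\ge R_0$ via $\tau\ge\tau_0$) likewise agrees with the paper's.
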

\begin{proof}
Again we use the same notation as in Section 3.
Here we note that
if $T_{\varepsilon}\le \tau_0$,
then the estimate \eqref{esTep} is obvious.
Thus, we may assume that
$T_{\varepsilon}> \tau_0$.

Now we use a fact that the inequality
$$
	dc^b-c\le (1-b)b^{b/(1-b)}d^{1/(1-b)}
$$
holds for all $d>0, 0<b<1, c\ge 0$.
We can immediately prove it by considering
the maximal value of the function
$f(c)=dc^b-c$.
From this and \eqref{eq34}, we obtain
\begin{equation}
\label{eq35}
	J_R\lesssim D(\tau, R)^q.
\end{equation}
On the other hand, by the assumption on the data,
there exist $C>0$ and $R_0$ such that
$J_R\ge C\varepsilon$ holds for all $R>R_0$.
Consequently, we have
\begin{equation}
\label{eq36}
	\varepsilon\lesssim D(\tau,R)^q
\end{equation}
for all
$\tau\in (\tau_0, T_{\varepsilon}), R\in (R_0,\infty)$.
We put $R=\tau^{\alpha}$ with $\alpha>0$
and
$\tau_0:=\max\{ 1, R_0^{1/\alpha} \}$.
Then we obtain
\begin{equation}
\label{eq37}
	D(\tau,\tau^{\alpha})\le
	\tau^{\max\{ -1-1/p+(1-1/p)\alpha, 1-1/p+(-1-1/p)\alpha, -\alpha/p \}}.
\end{equation}
Now we take
$\alpha=1+1/p$,
which minimizes the power of $\tau$ in \eqref{eq37}.
From \eqref{eq36} we see that
$$
	\varepsilon\lesssim D(\tau, \tau^{1+1/p})^q
	\lesssim \tau^{-\frac{1}{p-1}(1+1/p)}=\tau^{-\kappa}.
$$
Therefore, we have
$$
	\tau\le C\varepsilon^{-1/\kappa}.
$$
Since $\tau$ is arbitrarily in $(\tau_0, T_{\varepsilon})$,
it follows that
$$
	T_{\varepsilon}\le C\varepsilon^{-1/\kappa},
$$
which completes the proof.
\end{proof}

\section*{acknowledgement}
The author is deeply grateful to Professors
Akitaka Matsumura,
Kenji Nishihara
and
Tatsuo Nishitani
for the discussion at the starting point of this study.


\begin{thebibliography}{9}
\bibitem{D}\textsc{M. D'Abbicco},
{\em The threshold between effective and noneffective damping for semilinear waves},
arXiv:1211.0731v2.

\bibitem{DL}\textsc{M. D'abbicco, S. Lucente},
{\em A modified test function method for damped wave equations},
arXiv:1211.0453v1.

\bibitem{DLR}\textsc{M. D'Abbicco, S. Lucente, M. Reissig},
{\em Semi-Linear wave equations with effective damping},
Chin. Ann. Math., Ser. B, {\bf 34} (2013), 345-380.

\bibitem{F}\textsc{H. Fujita},
{\em On the blowing up of solutions of the Cauchy problem for $u_t=\Delta u+u^{1+\alpha}$},
J. Fac. Sci. Univ. Tokyo Sect. I, {\bf 13} (1966), 109-124.

\bibitem{HKN04}\textsc{N. Hayashi, E. I. Kaikina, P. I. Naumkin},
{\em Damped wave equation with super critical nonlinearities},
Differential Integral Equations, {\bf 17} (2004), 637-652.

\bibitem{HO}\textsc{T. Hosono, T. Ogawa},
{\em Large time behavior and $L^p$-$L^q$ estimate of solutions of 2-dimensional nonlinear damped wave equations},
J. Differential Equations, {\bf 203} (2004) 82-118.

\bibitem{IW}\textsc{M. Ikeda, Y. Wakasugi},
{\em A note on the lifespan of solutions to the semilinear damped wave equation},
Proc. Amer. Math. Soc. (to appear)

\bibitem{IT}\textsc{R. Ikehata, K. Tanizawa},
{\em Global existence of solutions for semilinear damped wave equations
in $\mathbf{R}^N$ with noncompactly supported initial data},
Nonliear Anal., {\bf 61} (2005), 1189-1208.

\bibitem{ITY}\textsc{R. Ikehata, G. Todorova, B. Yordanov},
{\em Critical exponent for semilinear wave equations with space-dependent potential},
Funkcialaj Ekvacioj, {\bf 52} (2009), 411-435.

\bibitem{ITY2}\textsc{R. Ikehata, G. Todorova, B. Yordanov},
{\em Optimal decay rate of the energy for wave equations with critical potential},
J. Math. Soc. Japan, {\bf 65} (2013), 183-236.

\bibitem{K}\textsc{T. Kato},
{\em Blow-up of solutions of some nonlinear hyperbolic equations},
Comm. Pure Appl. Math., {\bf 33} (1980), 501-505.

\bibitem{KK}\textsc{J. S. Kenigson, J. J. Kenigson},
{\em Energy decay estimates for the dissipative wave equation with space-time dependent potential},
Math. Meth. Appl. Sci., {\bf 34} (2011), 48-62.

\bibitem{Kh}\textsc{M. Khader},
{\em Global existence for the dissipative wave equations with space-time dependent potential},
Nonlinear Anal., {\bf 81} (2013), 87-100.

\bibitem{Ku}\textsc{H. J. Kuiper},
{\em Life span of nonegative solutions to certain qusilinear parabolic Cauchy problems},
Electron. J. Differential Equations, {\bf 2003} (2003), 1-11.

\bibitem{LNZ}\textsc{J. Lin, K. Nishihara, J. Zhai},
{\em Critical exponent for the semilinear wave equation with time-dependent damping},
Discrete Contin. Dyn. Syst., {\bf 32} (2012), 4307-4320.

\bibitem{MN}\textsc{P. Marcati, K. Nishihara},
{\em The $L^p-L^q$ estimates of solutions to one-dimensional damped wave equations and their application to the compressible flow through porous media},
J. Differential Equations, {\bf 191} (2003), 445-469.


\bibitem{M}\textsc{A. Matsumura},
{\em On the asymptotic behavior of solutions of semi-linear wave equations},
Publ. Res. Inst. Math. Sci., {\bf 12} (1976), 169-189.

\bibitem{Mo}\textsc{K. Mochizuki},
{\em Scattering theory for wave equations with dissipative terms},
Publ. Res. Inst. Math. Sci., {\bf 12} (1976), 383-390.

\bibitem{Na}\textsc{T. Narazaki},
{\em $L^p$-$L^q$ estimates for damped wave equations and their applications to semi-linear problem},
J. Math. Soc. Japan, {\bf 56} (2004), 585-626.

\bibitem{N1}\textsc{K. Nishihara},
{\em $L^p-L^q$ estimates of solutions to the damped wave equation in 3-dimensional space and their application},
Math. Z., {\bf 244} (2003), 631-649.

\bibitem{TY1}\textsc{G. Todorova, B. Yordanov},
{\em Critical exponent for a nonlinear wave equation with damping},
J. Differential Equations, {\bf 174} (2001), 464-489.

\bibitem{TY3}\textsc{G. Todorova, B. Yordanov},
{\em Weighted $L^2$-estimates for dissipative wave equations with
variable coefficients},
J. Differential Equations, {\bf 246} (2009), 4497-4518.

\bibitem{Wa1}\textsc{Y. Wakasugi},
{\em Small data global existence for the semilinear wave equation with space-time dependent damping},
J. Math. Anal. Appl., {\bf 393} (2012) 66-79.

\bibitem{Wa2}\textsc{Y. Wakasugi},
{\em Critical exponent for the semilinear wave equation with scale invariant damping},
Trends in Mathematics, Birkh\"auser, Basel (to appear).

\bibitem{Wi04}\textsc{J. Wirth},
{\em Solution representations for a wave equation with weak dissipation},
Math. Meth. Appl. Sci., {\bf 27} (2004), 101-124.

\bibitem{W1}\textsc{J. Wirth},
{\em Wave equations with time-dependent dissipation I. Non-effective dissipation},
J. Differential Equations, {\bf 222} (2006), 487-514.

\bibitem{W2}\textsc{J. Wirth},
{\em Wave equations with time-dependent dissipation II. Effective dissipation},
J. Differential Equations {\bf 232} (2007), 74-103.

\bibitem{HM}\textsc{H. Yang, A. Milani},
{\em On the diffusion phenomenon of quasilinear hyperbolic waves},
Bull. Sci. Math., {\bf124} (2000), 415-433.

\bibitem{Z}\textsc{Qi S. Zhang},
{\em A blow-up result for a nonlinear wave equation with damping: The critical case},
C. R. Acad. Sci. Paris S\'{e}r. I Math., {\bf 333} (2001), 109-114.

\end{thebibliography}
\end{document}